\providecommand{\tabularnewline}{\\}
\numberwithin{equation}{section}
\newcommand{\lyxaddress}[1]{
\par {\raggedright #1
\vspace{1.4em}
\noindent\par}
}
\theoremstyle{plain}
\newtheorem{thm}{\protect\theoremname}[section]
  \theoremstyle{definition}
  \newtheorem{defn}[thm]{\protect\definitionname}
  \theoremstyle{plain}
  \newtheorem{conjecture}[thm]{\protect\conjecturename}
  \theoremstyle{definition}
  \newtheorem{example}[thm]{\protect\examplename}
  \theoremstyle{remark}
  \newtheorem{note}[thm]{\protect\notename}
  \theoremstyle{remark}
  \newtheorem{rem}[thm]{\protect\remarkname}
  \theoremstyle{plain}
  \newtheorem{lem}[thm]{\protect\lemmaname}
  \theoremstyle{plain}
  \newtheorem{prop}[thm]{\protect\propositionname}
  \theoremstyle{plain}
  \newtheorem{cor}[thm]{\protect\corollaryname}
  \theoremstyle{remark}
  \newtheorem*{acknowledgement*}{\protect\acknowledgementname}
  \providecommand{\acknowledgementname}{Acknowledgement}
  \providecommand{\conjecturename}{Conjecture}
  \providecommand{\corollaryname}{Corollary}
  \providecommand{\definitionname}{Definition}
  \providecommand{\examplename}{Example}
  \providecommand{\lemmaname}{Lemma}
  \providecommand{\notename}{Note}
  \providecommand{\propositionname}{Proposition}
  \providecommand{\remarkname}{Remark}
\providecommand{\theoremname}{Theorem}
\begin{document}

\title{A note on the Eisenbud-Mazur Conjecture}

\author{Ajinkya A More}

\maketitle

\lyxaddress{Alumnus, Department of Mathematics, University of Michigan, Ann Arbor}
\begin{abstract}
The Eisenbud-Mazur conjecture states that given an equicharacteristic
zero, regular local ring $(R,\mathfrak{m})$ and a prime ideal $P\subset R$,
we have that $P^{(2)}\subseteq mP$. In this paper, we computationally
prove that the conjecture holds in the special case of certain prime
ideals in formal power series rings. 
\end{abstract}
Eisenbud-Mazur conjecture, regular local rings, symbolic powers, evolutions

\section{Introduction}

Eisenbud and Mazur \citep{eisenbud-mazur} studied symbolic powers
in connection with the question of existence of non-trivial evolutions. 
\begin{defn}
Let $R$ be a ring and $S$ be a local $R$-algebra essentially of
finite type. An \textit{evolution}\textbf{ }of $S$ over $R$ consists
of the following data:\end{defn}
\begin{itemize}
\item A local $R$-algebra $T$ essentially of finite type.
\item A surjection $T\to S$ of $R$-algebras such that if $\Omega_{T/R}$
and $\Omega_{S/R}$ denote the modules of K\"ahler differentials
of $T$ over $R$ and of $S$ over $R$ respectively, then, the induced
map $\Omega_{T/R}\otimes_{T}S\to\Omega_{S/R}$ is an isomorphism. 
\end{itemize}
The evolution is said to be \emph{trivial}\textbf{ }if $T\to S$ is
an isomorphism.

The question of existence of non-trivial evolutions leads to the Eisenbud-Mazur
conjecture via Theorem \ref{thm:-connection to evolutions}. We first
need a more general definition of symbolic powers 
\begin{defn}
Let $R$ be a ring and $I$ an ideal in $R$. For a positive integer
$n$, the \emph{$n$th symbolic power }of $I$ is defined to be 
\[
I^{(n)}:=\{r\in R:\frac{r}{1}\in I^{n}R_{P}\text{ for all }P\text{ such that }P\text{ is an associated prime of }I\}.
\]

\end{defn}

\begin{defn}
Let $R$ be a ring and $I$ an ideal in $R$. We say $I$ is an \emph{unmixed
ideal}%
\footnote{\emph{Note that for the case of unmixed ideals, Definition 1.2 of
symbolic powers reduces to the definition that appears in }\citep{eisenbud-mazur}:
given a ring $R$, an ideal $I$ and a positive integer $n$, $I^{(n)}:=\{r\in R:\frac{r}{1}\in I^{n}R_{P}\text{ for all }P\text{ such that }P\text{ is an minimal prime of }I\}$%
} if every associated prime ideal of $I$ is isolated. In other words,
an unmixed ideal has no embedded prime ideals. \end{defn}
\begin{thm}
\label{thm:-connection to evolutions} \citep{eisenbud-mazur} Let
$R$ be a regular ring. Let $(P,\mathfrak{m})$ be a localization
of a polynomial ring in finitely many variables over $R$. Let $I$
be an ideal of $P$. If $P/I$ is reduced and generically separable
over $R$, then, every evolution of $P/I$ is trivial if and only
if $I^{(2)}\subseteq\mathfrak{m}I$.
\end{thm}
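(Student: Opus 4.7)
The plan is to translate the Kähler-differential isomorphism condition into an ideal-theoretic inclusion via the conormal sequence, and then match the resulting obstruction against $I^{(2)}/\mathfrak{m}I$. First I would normalize the presentation of an evolution $T \twoheadrightarrow S = P/I$: by lifting a minimal set of generators of $\mathfrak{m}_T$, one can arrange $T = P/I_0$ for some ideal $I_0 \subseteq I$, so that triviality of the evolution is equivalent to $I_0 = I$. The kernel $J = I/I_0$ fits into the conormal sequence
$$J/J^2 \longrightarrow \Omega_{T/R}\otimes_T S \longrightarrow \Omega_{S/R}\longrightarrow 0,$$
and the evolution condition (the second map being an isomorphism) is equivalent to the vanishing of the first map, which translates back to the inclusion $dI \subseteq dI_0 + I\cdot\Omega_{P/R}$ inside $\Omega_{P/R}$.

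Next I would exploit generic separability of $P/I$ over $R$ by localizing at each minimal prime $Q$ of $I$: since the residue field extension at $Q$ is separable over an appropriate fraction field of $R$, the module $(\Omega_{S/R})_Q$ vanishes after tensoring with $\kappa(Q)$, forcing the conormal map $I_Q/I_Q^2 \to \Omega_{P_Q/R}\otimes (P/I)_Q$ to be surjective. Chasing this surjectivity through the displayed inclusion $dI\subseteq dI_0+I\cdot\Omega_{P/R}$ gives $I_Q \subseteq (I_0)_Q + I_Q^2$, and Nakayama over $P_Q$ promotes this to $I_Q = (I_0)_Q$ at every associated prime of $I$. Assembling the local equalities produces the global inclusion $I \subseteq I_0 + I^{(2)}$.

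The biconditional then follows in outline. If $I^{(2)} \subseteq \mathfrak{m}I$, then $I \subseteq I_0 + \mathfrak{m}I$, so Nakayama over $(P,\mathfrak{m})$ forces $I_0 = I$ and every evolution is trivial. Conversely, given $x \in I^{(2)} \setminus \mathfrak{m}I$, I would pick a minimal generating set of $I$ containing $x$ and take $I_0$ to be the ideal generated by the remaining generators; reducedness together with $x\in I^{(2)}$ provides, at each minimal prime, an expression of $dx$ modulo $I\cdot\Omega_{P/R}$ as a derivative of an element of $I_0^2$, and these local relations are then to be patched into a global verification of the differential inclusion $dI\subseteq dI_0 + I\cdot\Omega_{P/R}$, giving a non-trivial evolution.

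The principal obstacle will be the two patching steps: converting the pointwise relations $I_Q = (I_0)_Q$ at associated primes (forward direction) and the local quadratic expressions for $x\in I^{(2)}$ (converse) into statements that hold in $P$ rather than only after localization. This is where reducedness, generic separability, and regularity of $R$ must work in concert, using that the associated primes of $I$ are all minimal to control the passage from local to global data.
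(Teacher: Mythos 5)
The paper does not actually prove Theorem~\ref{thm:-connection to evolutions}; it cites it to \citep{eisenbud-mazur} and uses it as a black box. So there is no internal proof to compare against, but your sketch can still be evaluated on its own terms, and it contains a genuine error at the central step. You assert that generic separability forces $(\Omega_{S/R})_Q$ to vanish (after passing to $\kappa(Q)$), hence that the conormal map $I_Q/I_Q^2\to\Omega_{P_Q/R}\otimes\kappa(Q)$ is surjective. Both halves are wrong. If $\kappa(Q)$ has positive transcendence degree over the relevant residue field of $R$ --- which is the typical situation whenever $\dim(P/I)>0$ --- then $\Omega_{\kappa(Q)/R}$ is a nonzero vector space of dimension equal to that transcendence degree; separability controls its size, not its vanishing. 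And even if the cokernel did vanish, surjectivity of the conormal map would give you no leverage on the inclusion $dI\subseteq dI_0+I\,\Omega_{P/R}$: you would be unable to pass from ``the images of $I$ and $I_0$ agree modulo $I\,\Omega$'' to ``$I_Q\subseteq(I_0)_Q+I_Q^2$''. The property you actually need is \emph{injectivity} of the conormal map at each minimal prime $Q$. That is what lets you cancel: if $\overline{da}=\sum s_i\,\overline{da_i}$ in $\Omega_{P_Q/R}\otimes\kappa(Q)$ with the $a_i$ generating $I_0$, then $a-\sum c_i a_i$ (lifting the $s_i$ to $c_i\in P$) has zero conormal image, so lies in $I_Q^2$, whence $a\in(I_0)_Q+I_Q^2$; globally, $a-\sum c_i a_i\in I^{(2)}$ and $I\subseteq I_0+I^{(2)}$. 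Injectivity is not a consequence of $\Omega_{S/R}$ being small --- it is the left-exactness of the second fundamental sequence, which holds precisely because generic separability together with reducedness of $P/I$ and regularity of $R$ makes $S_Q$ formally smooth over $R$.

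Once this step is repaired, the remainder of your outline is workable. In the forward direction, $I\subseteq I_0+I^{(2)}\subseteq I_0+\mathfrak m I$ together with Nakayama over $(P,\mathfrak m)$ gives $I=I_0$. In the converse direction, choosing $x\in I^{(2)}\setminus\mathfrak m I$, extending to a minimal generating set and dropping $x$ to form $I_0$, the needed inclusion $dx\in I\,\Omega_{P/R}$ follows because each partial $\partial x/\partial y_i$ lies in $I_Q$ for every minimal prime $Q$ (as $x\in I_Q^2$), and since $I$ is radical, $\bigcap_Q Q=I$ captures it globally --- this dissolves the patching concern you flagged at the end. So the architecture is right, but the appeal to vanishing of $\Omega_{S/R}$ and to surjectivity must be replaced by the left-exactness/injectivity of the conormal sequence at minimal primes coming from smoothness.
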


We now state a slightly more general version of the Eisenbud-Mazur
conjecture.
\begin{conjecture}
(Eisenbud-Mazur) \label{(Eisenbud-Mazur) conjecture} Given a regular
local ring $(R,\mathfrak{m})$ containing a field of characteristic
zero and an unmixed ideal $I$ in $R$, $I^{(2)}\subseteq\mathfrak{m}I$. 
\end{conjecture}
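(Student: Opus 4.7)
The plan is to exploit the differential characterization of symbolic powers supplied by the Zariski--Nagata theorem: in an equicharacteristic-zero regular local ring $(R,\mathfrak{m})$, an unmixed ideal satisfies
\[
I^{(n)}=\{r\in R:\Delta(r)\in I\text{ for every }\Delta\in\mathrm{Diff}^{<n}_k(R)\},
\]
so in particular $r\in I^{(2)}$ if and only if $D(r)\in I$ for every $k$-linear derivation $D$ of $R$. The goal thus becomes: if every such $D$ sends $r$ into $I$, then $r\in\mathfrak{m}I$.

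First I would reduce to the complete case. Passage to the $\mathfrak{m}$-adic completion $\widehat R$ preserves regularity, the unmixedness of $I$, the equality $I^{(2)}\widehat R=(I\widehat R)^{(2)}$, and, by faithful flatness, the containment $\mathfrak{m}I\widehat R\cap R=\mathfrak{m}I$. By Cohen's structure theorem $\widehat R\cong k[[x_1,\ldots,x_n]]$, and the partials $\partial_i=\partial/\partial x_i$ span the continuous derivations, so the hypothesis translates into the concrete statement $\partial_i r\in I$ for every $i$.

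Next I would argue by contradiction. Suppose $r\in I^{(2)}\setminus\mathfrak{m}I$, so that $r$ can be included in a minimal generating set $r,r_2,\ldots,r_s$ of $I$. Via the conormal sequence and Theorem~\ref{thm:-connection to evolutions}, such an $r$ would correspond to a non-trivial evolution of $R/I$; equivalently, I would try to combine the relations $\partial_i r\in I$ using characteristic-zero identities of Euler type (formally $r=\sum x_i\partial_i r/d$ on homogeneous components of degree $d$) to express $r$ itself as $\sum a_j r_j$ with $a_j\in\mathfrak{m}$. In the power-series setting this becomes a question of whether the sequence of Jacobian-style membership conditions can be integrated back to a membership of $r$ in $\mathfrak{m}I$.

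The hard step is the global coupling: individually, each $\partial_i r\in I$ is a mild constraint, but amalgamating the $n$ of them into a single membership $r\in\mathfrak{m}I$ requires precise control over the syzygies among the $r_j$. This global syzygy analysis is precisely where the general conjecture remains open, and it is why a computational approach -- restricting to classes of primes in formal power series rings whose generators and syzygies are small and explicit enough to make the resulting derivation system tractable -- is the natural avenue, and presumably the one taken in the remainder of this paper.
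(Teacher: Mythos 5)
The statement you were asked to prove is labelled as a \emph{conjecture}, and neither you nor the paper proves it in this generality; the paper only establishes a special case (Theorem~\ref{main theorem}). You recognize this yourself in your closing paragraph, so the honest self-assessment is correct. I will therefore comment on what is right, what is imprecise, and how your sketch relates to the paper's actual strategy.

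Your opening premise -- the Zariski--Nagata differential characterization $I^{(n)}=\{r:\Delta(r)\in I,\ \Delta\in\mathrm{Diff}^{<n}_k(R)\}$ -- is correct for \emph{radical} (in particular prime) ideals in equicharacteristic zero, but not for arbitrary unmixed ideals; e.g.\ for a primary-but-not-prime unmixed $I$ the differential description of $I^{(2)}$ fails. Since the conjecture is stated for unmixed ideals, you would first need the reduction from unmixed to radical/prime ideals (the paper supplies this: Proposition~\ref{prop:reduction to dimension one} together with the remark that one may pass to radical $J$). The reduction to the complete case via faithful flatness and Cohen's structure theorem is fine.

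The key observation is that your sketch and the paper diverge entirely after the background stage. You propose a derivation-theoretic route: translate $\partial_i r\in I$ into membership $r\in\mathfrak{m}I$ via Euler-type integration and control over syzygies of a minimal generating set of $I$. The paper takes a different, explicitly computable path: it reduces (Proposition~\ref{prop:reduction to dimension one}) to primes $P$ with $\dim(S/P)=1$, realizes such a prime as the contraction of a ``parametrized'' prime $Q_1=(x_i-f_i(t))R$ under a degree-two Galois cover $R=k[[t,x_1,\dots,x_m]]\supset S=k[[t^2,x_1,\dots,x_m]]$ (Proposition~\ref{prop: hypothesis is not special}), uses the normalized trace to compute generators of $P$ (Proposition~\ref{Q1 meet Q2}), and then proves the stronger containment $(Q_1^2\cap Q_2^2)\cap S\subseteq\mathfrak{m}P$ by an explicit column-reduction of a matrix over the PID $k[[t]]$ encoding the syzygies. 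Your ``global syzygy analysis'' intuition points in the right direction, but the paper resolves it not through derivations but by choosing a class of primes for which the syzygy module over $k[[t]]$ is free and can be written down in closed form. So the two approaches are genuinely different: yours is a differential-operator heuristic that stalls exactly where the open problem lives, while the paper sidesteps the general obstruction by working in a geometry (one-dimensional primes with a parametrized double cover) where the computation closes.
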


The hypothesis that $R$ be regular is necessary. If $R$ is not regular,
there exists a prime ideal $P$ in $R$ for which $P^{(2)}\nsubseteq\mathfrak{m}P$
as the following well-known example that appears in \citep{huneke-ribbe}
shows.
\begin{example}
Let $R=k[x,y,z]/(x^{2}-yz)$, $P=(x,y)R$. Then $z\in R\setminus P$
and $zy=x^{2}\in P^{2}$. So that, $y\in P^{(2)}$. However, $y\notin(x,y,z)P$.
\end{example}

Hochster and Huneke \citep{hochster-huneke-uniform-symbolic} showed
that if $(R,\mathfrak{m})$ is a regular local ring containing a field
and $P$ is a prime ideal of height $c$ then $P^{(c+1)}\subseteq\mathfrak{m}P$.

Eisenbud and Mazur construct examples in every positive characteristic
$p$ to show that the corresponding statement of Conjecture \ref{(Eisenbud-Mazur) conjecture}
does not hold.

\section{Problem definition}

The primary result of the paper is to prove the Eisenbud-Mazur conjecture
for the following case.
\begin{thm}
\label{main theorem}(Main theorem) Let $R=k[[t,x_{1},...,x_{m}]]$
be the formal power series ring over a field $k$ of characteristic
$0$, in $m+1$ indeterminates. Let $S=k[[t^{2},x_{1},...,x_{m}]]$.
Let $f_{1}(t),...,f_{m}(t)\in k[[t]]$. Let $Q_{1}=(x_{1}-f_{1}(t),...,x_{m}-f_{m}(t))R$
and $Q_{2}=(x_{1}-f_{1}(-t),...,x_{m}-f_{m}(-t))R$. Then $Q_{1},Q_{2}$
are prime ideals that are conjugate under the action of the automorphism
on $ $$R$ given by $\sigma:R\to R$, where $\sigma(t)=-t$ and $\sigma(x_{i})=x_{i}$
for $1\leq i\leq m$. Further, $Q_{1},Q_{2}$ contract to the same
prime ideal, say $P$ in $S$, \emph{i.e.,} $P=Q_{1}\cap S=Q_{2}\cap S=(Q_{1}\cap Q_{2})\cap S$.
Let $\mathfrak{m}=(t^{2},x_{1},...,x_{n})S$. Then $P^{(2)}\subseteq\mathfrak{m}P$.\end{thm}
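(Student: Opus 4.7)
The plan is to establish that $P$ is itself a complete intersection ideal of height $m$ in the regular local ring $S$. Once this is proved, the conclusion is immediate: for a complete intersection ideal $I$ with $S/I$ a domain (our situation, since $P$ is prime), the associated graded ring $\mathrm{gr}_{I}(S)$ is a polynomial ring over $S/I$, so each $I^{n}$ is $I$-primary and hence $I^{(n)}=I^{n}$; combined with $P\subseteq\mathfrak{m}$, this yields $P^{(2)}=P^{2}\subseteq\mathfrak{m}P$.

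The main step is therefore to present $P$ by exactly $m$ generators, which I plan to do by analyzing the quotient $T:=S/P$. Since $P=Q_{1}\cap S$, the composition $S\hookrightarrow R\twoheadrightarrow R/Q_{1}\cong k[[t]]$ (sending $t\mapsto t$, $x_{i}\mapsto f_{i}(t)$) has kernel $P$ and identifies $T$ with the $k$-subalgebra $k[[t^{2},f_{1}(t),\ldots,f_{m}(t)]]$ of $k[[t]]$. Writing $u=t^{2}$ and decomposing $f_{i}(t)=g_{i}(u)+th_{i}(u)$ with $g_{i},h_{i}\in k[[u]]$, the even parts $g_{i}(u)$ already lie in $k[[u]]\subseteq T$. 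If every $h_{i}=0$, then $T=k[[u]]$ and $P=(x_{i}-f_{i}(u))S$ is directly a complete intersection of height $m$. Otherwise set $n=\min_{i}\{\mathrm{ord}_{u}h_{i}:h_{i}\ne 0\}$, choose $i_{0}$ attaining this minimum, and factor $h_{i_{0}}(u)=u^{n}w(u)$ with $w$ a unit in $k[[u]]$. Since $w^{-1}\in T$, the element $v:=th_{i_{0}}(u)\in T$ together with $u$ generates $T$ over $k$: for each $j$ the ratio $H_{j}(u):=h_{j}(u)/h_{i_{0}}(u)$ lies in $k[[u]]$ and satisfies $th_{j}(u)=v\cdot H_{j}(u)$. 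The only relation between $u$ and $v$ is $v^{2}=u\cdot h_{i_{0}}(u)^{2}=u^{2n+1}w(u)^{2}$.

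Lifting this presentation back to $S$, set $\tilde v:=x_{i_{0}}-g_{i_{0}}(u)\in S$ (which maps to $v$) and, for each $j\ne i_{0}$, set $\phi_{j}(u,\tilde v):=g_{j}(u)+\tilde v\cdot H_{j}(u)\in S$ (with $H_{j}=0$ if $h_{j}=0$). Then
\[
P_{0}\;:=\;\bigl(\tilde v^{2}-u^{2n+1}w(u)^{2},\ x_{j}-\phi_{j}(u,\tilde v)\ :\ j\neq i_{0}\bigr)S
\]
has $m$ generators, each of which lies in $P$ by direct substitution into the map $S\to k[[t]]$. Performing the invertible substitution $x_{i_{0}}=\tilde v+g_{i_{0}}(u)$ and eliminating each $x_{j}$ ($j\ne i_{0}$) via its relation identifies $S/P_{0}$ with $k[[u,\tilde v]]/(\tilde v^{2}-u^{2n+1}w(u)^{2})$, which is a one-dimensional complete local domain because the defining polynomial is irreducible (the odd $u$-exponent of $u^{2n+1}w(u)^{2}$ prevents it from being a square in $k[[u]]$). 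The induced surjection $S/P_{0}\twoheadrightarrow T$ is injective: any element of $S/P_{0}$ reduces to $A(u)+B(u)\tilde v$, whose image $A(t^{2})+B(t^{2})t^{2n+1}w(t^{2})$ in $k[[t]]$ separates cleanly into even and odd parts, forcing $A=B=0$ when it vanishes. Therefore $P_{0}=P$, and $P$ is a complete intersection of height $m$.

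The principal obstacle I expect is the verification $P_{0}=P$: surjectivity of $S/P_{0}\twoheadrightarrow T$ is built into the construction, but injectivity rests on the parity argument above, and the full treatment must be threaded consistently through the degenerate cases (every $h_{i}=0$, giving $T=k[[u]]$, or $n=0$, in which case $v$ already has $t$-order $1$ and $T=k[[t]]$ is regular so that $P$ is automatically part of a regular system of parameters of $S$).
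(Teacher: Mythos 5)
Your proof is correct, and it takes a genuinely different --- and dramatically shorter --- route than the paper's. Your key observation is that the $t^{2}$-structure forces $P$ to be a complete intersection: since $T=S/P$ embeds into $k[[t]]$, already contains $u=t^{2}$, and the odd parts $th_{i}(u)$ of the $f_{i}$ are all $k[[u]]$-multiples of the one of smallest $u$-order, $T$ is generated over $k$ by just $u$ and $v=th_{i_{0}}(u)$, hence is a hypersurface $k[[u,v]]/(v^{2}-u^{2n+1}w(u)^{2})$. Lifting this presentation to $S$ produces exactly $m$ generators for a prime $P_{0}\subseteq P$ with $\dim(S/P_{0})=\dim(S/P)=1$, forcing $P_{0}=P$. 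The standard fact (the paper's cited Result 2.1 of Hochster) that a complete-intersection prime has $P^{(n)}=P^{n}$ then gives $P^{(2)}=P^{2}\subseteq\mathfrak{m}P$ immediately. The paper instead proceeds computationally: after reducing to odd $f_{i}$, it derives a (highly non-minimal) generating set for $P$ via the trace map (Proposition \ref{Q1 meet Q2}), computes a $20$-element (for $m=3$) or $\frac{(m-1)(m+1)(m+2)}{2}$-element (general $m$) syzygy module to generate $Q_{1}^{2}\cap Q_{2}^{2}$, proves $P^{(2)}\subseteq(Q_{1}^{2}\cap Q_{2}^{2})\cap S$ (Corollary \ref{cor:symbolic power containment}), and finally checks term-by-term that the trace of each generator of $Q_{1}^{2}\cap Q_{2}^{2}$ lies in $\mathfrak{m}P$. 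The paper's more laborious path is at least partly explicable: Note \ref{special hypothesis} and Propositions \ref{prop:reduction to dimension one}--\ref{prop: hypothesis is not special} indicate that the goal is the analogous set-up with $t^{1/n}$ for arbitrary $n$, and for $n\geq 3$ the quotient $T$ contains $t^{n}$ but need not be a hypersurface (e.g.\ $k[[t^{3},t^{4},t^{5}]]$ is not Gorenstein), so your complete-intersection shortcut is genuinely special to $n=2$. Two small points worth making explicit in a write-up: (1) the standing hypothesis $f_{i}(0)=0$ must be stated (otherwise $x_{i}\mapsto f_{i}(t)$ is not even a well-defined local map and $Q_{1}$ is the unit ideal), and this also ensures $g_{i_{0}}(0)=0$ so that $(u,\tilde{v})$ is a regular system of parameters for $k[[u,x_{i_{0}}]]$; (2) irreducibility of $\tilde{v}^{2}-u^{2n+1}w(u)^{2}$ should be established in $k[[u,\tilde{v}]]$, not just in $k[[u]][\tilde{v}]$ --- this follows from Weierstrass preparation, or directly from the paper's own Theorem \ref{thm:-irreducibility criterion} with the grading $\deg u=2$, $\deg\tilde{v}=2n+1$, whose leading form is $\tilde{v}^{2}-w(0)^{2}u^{2n+1}$.
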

\begin{note}
\label{special hypothesis} The conditions above are not as special
as it may seem. We show, in Proposition \ref{prop:reduction to dimension one}
below, that for an equicharacteristic complete local ring $S$, in
order to prove the Eisenbud-Mazur conjecture for prime ideals, it
is sufficient to restrict to prime ideals $P$ such that $\text{dim}(S/P)=1$.
Further, in Proposition \ref{prop: hypothesis is not special}, we
show that if $S=k[[t,x_{1},...,x_{m}]]$, where $k$ is an algebraically
closed field of characteristic $0$ and $P$ is a prime ideal in $S$
such that $\text{dim}(S/P)=1$, then, there exists a positive integer
$n$ such that for $R=k[[t^{\frac{1}{n}},x_{1},...,x_{m}]]$, there
exists a prime ideal $Q=(x_{1}-f_{1}(t^{\frac{1}{n}}),...,x_{m}-f_{m}(t^{\frac{1}{n}}))R$
such that $P=Q\cap S$. The case discussed in the preceding paragraph
is special of this set-up with $n=2$.\end{note}
\begin{rem}
From the start of section \ref{section with notation change} to the
remainder of the paper, the notation (including the assumptions on
the underlying rings and fields) will be as per the statement of Theorem
\ref{main theorem}. Prior to the start of section \ref{section with notation change},
any such assumptions that are made will be explicitly specified in
the statements of the correponding propositions.
\end{rem}
We first show that if $(R,\mathfrak{m})$ is an equicharacteristic
complete local ring and if the Eisenbud-Mazur conjecture holds for
height unmixed ideals $I$ such that $\text{dim}(R/I)=1$, then, it
holds for all height unmixed ideals in $R$. We first need a few preparatory
results starting with an irreducibility criterion for formal power
series (page 164, \citep{irreducible-power-series}). 
\begin{thm}
\citep{irreducible-power-series} \label{thm:-irreducibility criterion}
Consider the grading on $k[x,y]$ (where $k$ is a field) in which
$\text{deg}(x)=p>0$ and $\text{deg}(y)=q>0$. Let $R=k[[x,y]]$ and
let $f\in R\setminus\{0\}$. Let $l(f)$ denote the homogeneous polynomial
of smallest degree (with respect to the above grading) occurring in
$f$. If for some choice of $p,q$, $l(f)$ is an irreducible polynomial
in $k[x,y]$, then, $f$ is irreducible in $R$.\end{thm}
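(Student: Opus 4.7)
The plan is to argue by contradiction: suppose $f=gh$ in $R=k[[x,y]]$ with $g,h$ both non-units, and derive a contradiction from the irreducibility of $l(f)$ in $k[x,y]$. The central observation is that the weighted grading $\deg(x)=p$, $\deg(y)=q$ (with $p,q>0$) induces a valuation on $R$, since for each integer $d\geq 0$ there are only finitely many monomials of weighted degree $d$, so every nonzero power series $u\in R$ has a well-defined ``initial form'' $l(u)$, namely the nonzero homogeneous polynomial in $k[x,y]$ of smallest weighted degree appearing in the expansion of $u$.

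First I would verify the multiplicativity $l(gh)=l(g)\,l(h)$ for nonzero $g,h\in R$. Writing $g=\sum_{i\geq d_g} g_i$ and $h=\sum_{j\geq d_h} h_j$ as sums of weighted-homogeneous components with $l(g)=g_{d_g}$ and $l(h)=h_{d_h}$, the degree-$(d_g+d_h)$ piece of $gh$ is exactly $g_{d_g}h_{d_h}$, and there is no piece of lower degree. Since $k[x,y]$ is a domain, $g_{d_g}h_{d_h}\neq 0$, so $l(gh)=l(g)l(h)$. In particular $l(f)=l(g)\,l(h)$.

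Now, because $g$ and $h$ are non-units in the local ring $R$, both lie in the maximal ideal $(x,y)$, so every monomial appearing in either has strictly positive weighted degree (this uses $p,q>0$). Consequently $l(g)$ and $l(h)$ are homogeneous polynomials of positive degree in $k[x,y]$, hence neither is a unit in $k[x,y]$. Then $l(f)=l(g)l(h)$ expresses $l(f)$ as a product of two non-units of $k[x,y]$, contradicting the hypothesis that $l(f)$ is irreducible. Therefore no such factorization exists, and $f$ is irreducible in $R$ (after also noting $f$ is itself a non-unit because $l(f)$, having positive degree, forces $f\in(x,y)$).

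I do not expect a genuine obstacle here: the only subtle point is confirming that the weighted grading gives a bona fide valuation on the power series ring (rather than just on the polynomial ring), which is why the strict positivity $p,q>0$ is essential — it is what guarantees each weighted-degree stratum is finite-dimensional and that $l(u)$ is a polynomial rather than a formal series. Once that is in hand, the rest is just multiplicativity of initial forms in a domain.
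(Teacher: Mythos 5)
The paper does not include a proof of this statement; it simply cites it from a reference and uses it as a black box. Your argument via multiplicativity of weighted initial forms is correct and is the standard proof: positivity of the weights $p,q$ guarantees each weighted-degree stratum of $k[[x,y]]$ is spanned by finitely many monomials, so $l(\cdot)$ is a well-defined polynomial-valued map; $l(gh)=l(g)l(h)$ holds because $k[x,y]$ is a domain; a non-unit $g\in R$ lies in $(x,y)$ and hence has $\deg l(g)>0$, so a factorization $f=gh$ into non-units would give a non-trivial factorization $l(f)=l(g)l(h)$ in $k[x,y]$, contradicting irreducibility; and you correctly dispose of the trivial-factorization issue by noting $l(f)$ irreducible forces $\deg l(f)>0$, hence $f\in(x,y)$ is itself a non-unit.
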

\begin{lem}
\label{lem:prime ideals in power series ring} Let $R=k[[x_{1},...,x_{n}]]$,
where $n>2$ and $k$ is a field. If $r,s$ are positive integers
such that $\text{gcd}(r,s)=1$, then, the ideal $P=(x_{1}^{r}-x_{2}^{s})R$
is prime in $R$.\end{lem}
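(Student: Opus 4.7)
The plan is to reduce the assertion to the two-variable polynomial case in two stages. Set $A = k[[x_1, x_2]]$ and use the canonical isomorphism $R \cong A[[x_3, \dots, x_n]]$. For any $f \in A$, reducing coefficients modulo $f$ gives a natural isomorphism
\[
R/(f)R \;\cong\; (A/(f))[[x_3, \dots, x_n]],
\]
and since a power series ring over a domain is again a domain, it suffices to show that $A/(x_1^r - x_2^s)$ is a domain. As $A$ is a two-dimensional regular local ring, hence a UFD, this reduces further to showing that $f = x_1^r - x_2^s$ is irreducible in $A$.

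To transfer the irreducibility question from $A = k[[x_1, x_2]]$ to the polynomial ring $k[x_1, x_2]$, I would invoke Theorem~\ref{thm:-irreducibility criterion} with the weighted grading $\deg(x_1) = s$, $\deg(x_2) = r$. Under this grading $f$ is already homogeneous of degree $rs$, so $l(f) = f$, and the criterion deduces irreducibility of $f$ in $A$ from irreducibility of $f$ in $k[x_1, x_2]$.

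Finally, to prove $x_1^r - x_2^s$ is irreducible in $k[x_1, x_2]$, I would consider the ring homomorphism $\phi : k[x_1, x_2] \to k[t]$ defined by $x_1 \mapsto t^s$, $x_2 \mapsto t^r$. The image is a subring of the domain $k[t]$, so $\ker \phi$ is prime, and $x_1^r - x_2^s \in \ker \phi$ is immediate. The main computational point is to establish the reverse inclusion $\ker \phi \subseteq (x_1^r - x_2^s)$: any $g \in k[x_1, x_2]$ can first be reduced modulo $x_1^r - x_2^s$ to a normal form $g \equiv \sum_{0 \le i < r} g_i(x_2) x_1^i$, and then $\phi(g) = \sum_{i, j} c_{ij} t^{si + rj}$ vanishes in $k[t]$ iff every $c_{ij} = 0$, because the exponents $si + rj$ with $0 \le i < r$ and $j \ge 0$ are pairwise distinct precisely when $\gcd(r, s) = 1$ (if $s(i_1 - i_2) = r(j_2 - j_1)$ with $|i_1 - i_2| < r$ then coprimality forces $i_1 = i_2$). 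This distinctness argument is the only nontrivial step, and it is the sole place where the coprimality hypothesis enters the argument; once it is in hand, the two reductions above assemble the proof.
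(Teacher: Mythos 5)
Your proof is correct and follows essentially the same route as the paper: both reduce to irreducibility of $x_1^r - x_2^s$ in $k[x_1,x_2]$ and then transfer this to $k[[x_1,x_2]]$ via Theorem~\ref{thm:-irreducibility criterion} with the weighted grading $\deg(x_1)=s$, $\deg(x_2)=r$. You supply two details the paper treats as known: the monomial-exponent argument for irreducibility of $x_1^r - x_2^s$ in $k[x_1,x_2]$ (via the map $x_1\mapsto t^s$, $x_2\mapsto t^r$ and the distinctness of exponents $si+rj$ with $0\le i<r$), and a cleaner passage to $n$ variables through $R/(f)R\cong (A/(f))[[x_3,\dots,x_n]]$ rather than the paper's ``irreducible in $k[[x_1,x_2]]$ hence irreducible in $R$, hence prime since $R$ is a UFD.'' Both passages are valid; yours avoids the need to justify that irreducibility persists under adjoining power-series variables.
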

\begin{proof}
Now $x_{1}^{r}-x_{2}^{s}$ generates a prime ideal in $k[x_{1},x_{2}]$
or equivalently it is irreducible when $ $$\text{gcd}(r,s)=1$. Then,
by Theorem \ref{thm:-irreducibility criterion}, $x_{1}^{r}-x_{2}^{s}$
is irreducible in $k[[x_{1},x_{2}]]$ and hence irreducible in $R$.
Then, since $R$ is a unique factorization domain, ideal $P=(x_{1}^{r}-x_{2}^{s})R$
is prime in $R$.\end{proof}
\begin{prop}
\label{prop:complete local rings have small primes} Let $(R,\mathfrak{m})$
be an equicharacteristic Noetherian complete local domain of dimension
$d\geq2$. Then, for every positive integer $n$, there exists a prime
ideal $P_{n}\neq0$ in $R$ such that $P_{n}\subseteq\mathfrak{m}^{n}$. \end{prop}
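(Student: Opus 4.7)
The plan is to reduce to the power series case via Cohen's structure theorem and then invoke Lemma \ref{lem:prime ideals in power series ring} to exhibit small primes. By Cohen's structure theorem for equicharacteristic complete local rings, we may fix a coefficient field $k \subseteq R$ and a system of parameters $y_1, \ldots, y_d \in \mathfrak{m}$ such that the power series subring $A := k[[y_1, \ldots, y_d]]$ embeds in $R$ as a module-finite extension. Write $\mathfrak{n} = (y_1, \ldots, y_d)$ for the maximal ideal of $A$; since $\mathfrak{n} \subseteq \mathfrak{m}$, we have $\mathfrak{n}^k \subseteq \mathfrak{m}^k$ for every $k \geq 1$.

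Given $n \geq 1$, I would pick positive integers $r, s$ with $\gcd(r, s) = 1$ and $\min(r, s) \geq n$. Lemma \ref{lem:prime ideals in power series ring} then guarantees that $f := y_1^r - y_2^s$ generates a nonzero prime ideal $(f)A$ in $A$, and visibly $f \in \mathfrak{n}^{\min(r,s)} \subseteq \mathfrak{m}^n$. So $(f)A$ is already a nonzero prime of $A$ sitting inside $\mathfrak{m}^n$.

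To transfer this to $R$, consider the extended ideal $fR$: it is nonzero and proper (since $f \in \mathfrak{m} \setminus \{0\}$), of height $1$ by Krull's principal ideal theorem. Let $Q$ be any minimal prime over $fR$. Because $A$ is a unique factorization domain (hence normal) and $A \hookrightarrow R$ is integral, going-down applies and gives that $Q$ has height $1$ in $R$ with $Q \cap A = (f)A$; in particular $Q$ is a nonzero prime of $R$.

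\textbf{The main obstacle} is to conclude that $Q$ itself lies in $\mathfrak{m}^n$, not merely that its contraction to $A$ does: a priori $Q$ could contain elements of $R$ outside $A$ having small $\mathfrak{m}$-adic order. The route I expect to take is to exploit flexibility in the Cohen presentation by replacing each $y_j$ with $y_j^N$ for $N$ large (the resulting $k[[y_1^N, \ldots, y_d^N]]$ is still a module-finite subring of $R$), so that the corresponding element $f$ lies in $\mathfrak{m}^{N\min(r,s)}$ and $fR$ sits in an arbitrarily deep power of $\mathfrak{m}$. Combined with an Artin--Rees type estimate applied to the inclusion $R \hookrightarrow \bar{R}$ into the integral closure (which is module-finite over $R$ in the complete local setting), and the fact that the multiplicity of $fR$ at $Q$ is bounded by the generic rank $[\mathrm{Frac}(R):\mathrm{Frac}(A)]$, one obtains a lower bound on the $\mathfrak{m}$-adic order of every element of $Q$ in terms of $N$; taking $N$ sufficiently large forces $Q \subseteq \mathfrak{m}^n$. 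Closing this quantitative estimate, rather than the Cohen-structure reduction, is the crux of the proof.
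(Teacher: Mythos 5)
You have correctly identified the crux: knowing $Q \cap A = (f)A \subseteq \mathfrak{n}^n$ does not by itself give $Q \subseteq \mathfrak{m}^n$, because $fR$ can acquire ``shallow'' minimal primes upon extension to $R$. This is a real issue, and in fact the paper's own proof passes over it silently --- it simply asserts that $P_n \subseteq \mathfrak{m}^n$ for a prime $P_n$ lying over $\mathfrak{p}_n$, with no supporting argument. To see that something is genuinely needed, present $R = k[[s,t]]$ over $A = k[[s^2,t]]$ and take $n = 3$: then $\mathfrak{p}_3 = ((s^2)^3 - t^4)A$ is prime in $A$ and lies in $\mathfrak{n}^3 \subseteq \mathfrak{m}^3$, but in $R$ one has $s^6 - t^4 = (s^3 - t^2)(s^3 + t^2)$, so every prime of $R$ lying over $\mathfrak{p}_3$ is generated by an element of $\mathfrak{m}$-adic order $2$ and is \emph{not} contained in $\mathfrak{m}^3$. (Of course $R$ has primes of every depth --- $(s^n - t^{n+1})R$ works --- but this particular Noether normalization does not see them.)

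Unfortunately the fix you sketch does not close the gap. Replacing $y_j$ by $y_j^N$ makes $f = y_1^{Nr} - y_2^{Ns}$ deeper, but $f$ is divisible in $A$ (hence in $R$) by the original $y_1^r - y_2^s$, so every shallow minimal prime $Q$ of $(y_1^r - y_2^s)R$ persists as a minimal prime of $fR$ lying over $fA'$; these primes do not become deeper, they only acquire more companions. Nor does the multiplicity of $fR$ at such a $Q$ grow with $N$: in the example above $(s^3 - t^2)$ divides $s^{6N}-t^{4N}$ exactly once for every $N$, so the ``multiplicity bounded by generic rank'' observation yields no lower bound on $\mathrm{ord}_{\mathfrak{m}}$ of elements of $Q$ in terms of $N$. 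To salvage the normalization route one must choose a Cohen presentation adapted to $R$ so that $\mathfrak{p}_n R$ has a deep minimal prime, which requires more information about $R$ than an arbitrary module-finite $A$ provides. One useful reduction: an Artin--Rees argument shows $\overline{\mathfrak{m}}^{Nk} \cap R \subseteq \mathfrak{m}^{k-c}$ for the integral closure $\overline{R}$ with fixed constants $N,c$, so a deep prime of $\overline{R}$ contracts to a deep prime of $R$ and it suffices to treat $R$ normal; but even then a further idea is needed that neither your sketch nor the paper's own proof supplies.
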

\begin{proof}
Since $(R,\mathfrak{m})$ is an equicharacteristic complete local
domain, it is module finite over $S=k[[x_{1},...,x_{d}]]$, where
$k$ is a field (Theorem 4.3.3, page 61, \citep{Hunke-Swanson}).
Since $d\geq2$, by lemma \ref{lem:prime ideals in power series ring},
$\mathfrak{p}_{n}=(x_{1}^{n}-x_{2}^{n+1})S$ is a prime ideal in $S$
and $\mathfrak{p}_{n}\subseteq(x_{1},...,x_{d})^{n}S$. Let $P_{n}$
be a prime ideal in $R$ lying over $\mathfrak{p}_{n}$. Then $P_{n}\ne0$
and $P_{n}\subseteq\mathfrak{m}^{n}$.
\end{proof}

\begin{prop}
\label{prop:reduction to dimension one} Let $(R,\mathfrak{m})$ be
an equicharacteristic Noetherian complete local ring. Let $I$ be
an ideal of $R$ such that $\text{dim}(R/P)>1$ for every associated
prime ideal $P$ of $R/I$. If there exists an element $r\in R$ such
that $r\in I^{(2)}\setminus\mathfrak{m}I$, then, there exists an
ideal $J$ such that $I\subsetneq J$, $r\in J^{(2)}\setminus\mathfrak{m}J$
and $\text{dim}(R/J)<\text{dim}(R/I)$. Moreover, if $I$ is height
unmixed $J$ can be chosen to be a height unmixed ideal. If $I$ is
radical, $J$ can be chosen to be radical. \end{prop}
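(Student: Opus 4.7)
The plan is to construct $J$ as an intersection of primes obtained by refining each associated prime of $R/I$. Let $P_{1},\dots,P_{s}$ be the associated primes of $R/I$; by hypothesis $\dim R/P_{i}\geq 2$ and each $R/P_{i}$ is an equicharacteristic complete local domain. Proposition~\ref{prop:complete local rings have small primes}, applied inside each $R/P_{i}$, produces for every $n$ a nonzero prime $\mathfrak{q}_{n}^{(i)}\subseteq(\mathfrak{m}/P_{i})^{n}$. Krull's intersection theorem then gives $\bigcap_{n}\mathfrak{q}_{n}^{(i)}=0$, the key flexibility: no prescribed nonzero ideal of $R/P_{i}$ is contained in $\mathfrak{q}_{n}^{(i)}$ for all $n$.

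Set $U:=(I^{2}:r)$, so $U\not\subseteq P_{i}$ by the hypothesis $r\in I^{(2)}$. For each $i$ I would choose $\tilde{P}_{i}\supsetneq P_{i}$ as the preimage in $R$ of some $\mathfrak{q}_{n}^{(i)}$ with $n$ large enough that simultaneously (a)~$\tilde{P}_{i}\subseteq P_{i}+\mathfrak{m}^{N}$ for a prescribed large $N$, (b)~$\tilde{P}_{i}\not\supseteq P_{j}$ for every $j\neq i$, and (c)~$\tilde{P}_{i}\not\supseteq U$; each of (a), (b), (c) is a non-containment of a specified nonzero ideal of $R/P_{i}$ in $\mathfrak{q}_{n}^{(i)}$ and so holds for $n$ large. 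Set $J:=\bigcap_{i=1}^{s}\tilde{P}_{i}$. Condition (b) forces the $\tilde{P}_{i}$ to be pairwise incomparable, so $\mathrm{Ass}(R/J)=\{\tilde{P}_{1},\dots,\tilde{P}_{s}\}$ and $JR_{\tilde{P}_{i}}=\tilde{P}_{i}R_{\tilde{P}_{i}}$; picking $u\in U\setminus\tilde{P}_{i}$ via (c) gives $ur\in I^{2}\subseteq\tilde{P}_{i}^{2}$ with $u$ a unit in $R_{\tilde{P}_{i}}$, so $r\in\tilde{P}_{i}^{2}R_{\tilde{P}_{i}}=J^{2}R_{\tilde{P}_{i}}$ and hence $r\in J^{(2)}$. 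That $J\supsetneq I$ follows because any $\tilde{P}_{i}\subseteq P_{j}$ would force $P_{i}\subseteq P_{j}$, then $i=j$ by incomparability of the minimal primes, and finally $\tilde{P}_{j}\subseteq P_{j}$, contradicting $\tilde{P}_{j}\supsetneq P_{j}$. Moreover $\dim R/\tilde{P}_{i}<\dim R/P_{i}$ for every $i$ yields $\dim R/J<\dim R/I$; $J$ is radical when $I$ is, and arranging the $\mathfrak{q}_{n}^{(i)}$ to have height~$1$ (automatic for the $(x_{1}^{n}-x_{2}^{n+1})S$ primes from the proof of Proposition~\ref{prop:complete local rings have small primes}) produces a height-unmixed $J$ when $I$ is height unmixed.

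The main obstacle is the remaining property $r\not\in\mathfrak{m}J$, which is where constraint (a) and the choice of $N$ enter. By Krull's intersection theorem in $R$, $\bigcap_{n}(\mathfrak{m}I+\mathfrak{m}^{n})=\mathfrak{m}I$, so fix $N_{0}$ with $r\not\in\mathfrak{m}I+\mathfrak{m}^{N_{0}}$. It suffices to show $J\subseteq I+\mathfrak{m}^{N_{0}-1}$, whence $\mathfrak{m}J\subseteq\mathfrak{m}I+\mathfrak{m}^{N_{0}}$. To achieve this I would apply Artin--Rees to the module-finite embedding $R/\sqrt{I}\hookrightarrow\prod_{i}R/P_{i}$: any element of $J$ lies in $P_{i}+\mathfrak{m}^{N}$ for each $i$ by (a), so its image in $\prod R/P_{i}$ lies in $\mathfrak{m}^{N}\cdot\prod R/P_{i}$; Artin--Rees contracts this back into $\mathfrak{m}^{N-c}(R/\sqrt{I})$ for a constant $c$ depending only on the embedding, giving $J\subseteq\sqrt{I}+\mathfrak{m}^{N-c}$. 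For radical $I$ this equals $I+\mathfrak{m}^{N-c}$, and taking $N\geq N_{0}-1+c$ closes the argument. For non-radical $I$ a bit more care is needed: one replaces each $\tilde{P}_{i}$ by a suitably small $\tilde{P}_{i}$-primary ideal containing the $P_{i}$-primary component of $I$, so that $J$ stays close to $I$ rather than to $\sqrt{I}$, and the rest of the proof transfers with minor adjustments.
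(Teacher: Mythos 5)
Your argument is essentially complete and correct when $I$ is radical, and in that case it is arguably cleaner than the paper's: by choosing $N_{0}$ in advance with $r\notin\mathfrak{m}I+\mathfrak{m}^{N_{0}}$ (Krull intersection in $R$), and applying Artin--Rees once to the module-finite embedding $R/\sqrt{I}\hookrightarrow\prod_{i}R/P_{i}$, you bound $J\subseteq\sqrt{I}+\mathfrak{m}^{N-c}$ in one step, whereas the paper inductively applies Artin--Rees pairwise to the ideals $\mathfrak{p}_{i}+\mathfrak{m}^{N}$. You also avoid Chevalley's theorem entirely in this case, because the small primes $\tilde{P}_{i}$ satisfy $\tilde{P}_{i}\subseteq P_{i}+\mathfrak{m}^{n}$ directly by construction. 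The $J^{(2)}$ argument via $U=(I^{2}:r)$ and the three non-containment conditions (a)--(c), each secured by Krull intersection in $R/P_{i}$, is sound. (Minor note: once $\dim R/J<\dim R/I$ is known, $J\supsetneq I$ is immediate; the separate incomparability chase is superfluous.)

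The genuine gap is in the non-radical case, which the proposition covers and which you dispatch with one sentence as ``minor adjustments.'' There the analogue of condition (a) is not automatic. You want a $\tilde{P}_{i}$-primary ideal $\tilde{\mathfrak{q}}_{i}$ with $\mathfrak{p}_{i}\subseteq\tilde{\mathfrak{q}}_{i}\subseteq\mathfrak{p}_{i}+\mathfrak{m}^{N}$, where $\mathfrak{p}_{i}$ is the $P_{i}$-primary component of $I$. The natural candidates $\tilde{\mathfrak{q}}_{i,M}=(\mathfrak{p}_{i}+\tilde{P}_{i}^{M}):(R\setminus\tilde{P}_{i})$ do satisfy $\bigcap_{M}\tilde{\mathfrak{q}}_{i,M}=\mathfrak{p}_{i}$, but passing from this intersection statement to the needed containment $\tilde{\mathfrak{q}}_{i,M}\subseteq\mathfrak{p}_{i}+\mathfrak{m}^{N}$ for $M\gg 0$ is not a Krull-type fact about a single element: it is Chevalley's theorem on descending chains of ideals in a complete Noetherian local ring, which is exactly what the paper invokes at this point. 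Without it (or an equivalent), the ``suitably small $\tilde{P}_{i}$-primary ideal'' you posit is not known to exist, and your Artin--Rees step ($J\subseteq I+\mathfrak{m}^{N-c}$ via the embedding $R/I\hookrightarrow\prod_{i}R/\mathfrak{p}_{i}$) has nothing to feed on. The overall strategy you indicate is the right one, but the replacement of primes by primary ideals is not a cosmetic change --- it forces in a new theorem that uses completeness in an essential way, and this should be stated.
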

\begin{proof}
Let $I=\mathfrak{p}_{1}\cap...\cap\mathfrak{p_{n}}$, where $\mathfrak{p_{i}}$
is a $P_{i}\text{-}$primary ideal, be the primary decomposition of
$I$. 

By hypothesis, $\text{dim}(R/P_{i})>1$. Then, by Proposition \ref{prop:complete local rings have small primes},
$R/P_{i}$ has non-zero prime ideals, say $\mathcal{Q}_{i,t}$ such
that $\mathcal{Q}_{i,t}\subseteq(\mathfrak{m}/P_{i})^{t}$ for all
positive integers $t$. Without loss of generality we may choose $\mathcal{Q}_{i,t}$
such that $\text{ht}(\mathcal{Q}_{i,t})=1$. Fix a positive integer
$t=\tau$ and let $Q_{i,\tau}$ denote the preimage of $\mathcal{Q}_{i,\tau}$
in $R$. Set $Q_{i}=Q_{i,\tau}$ for brevity of notation. Then $Q_{i}$
are prime ideals in $R$ such that $P_{i}\subsetneq Q_{i}\subseteq P_{i}+\mathfrak{m}^{\tau}$
for $1\leq i\leq n$. Then we define ideals 
\[
\mathfrak{q}_{i,m}:=(\mathfrak{p_{i}}+Q_{i}^{m}):(R\setminus Q_{i})=\{r\in R:rs\in(\mathfrak{p_{i}}+Q_{i}^{m})\text{ for some }s\in(R\setminus Q_{i})\}
\]
By definition, $\mathfrak{q}_{i,m}$ are $Q_{i}\text{-}$primary.

Next we claim that $\cap_{m\in\mathbb{Z}_{>0}}\mathfrak{q}_{i,m}=\mathfrak{p}_{i}$.
Let $r\in\cap_{m\in\mathbb{Z}_{>0}}\mathfrak{q}_{i,m}$. Then there
exists a $w_{m}\in R\setminus Q$ such that $rw_{m}\in(\mathfrak{p_{i}}+Q_{i}^{m})$
for all $m\in\mathbb{Z}_{>0}$. Thus, $r\in\cap_{m\in\mathbb{Z}_{>0}}(\mathfrak{p_{i}}+Q_{i}^{m})R_{Q_{i}}=\cap_{m\in\mathbb{Z}_{>0}}(\mathfrak{p_{i}}R_{Q_{i}}+Q_{i}^{m}R_{Q_{i}})$.
By Krull's intersection theorem applied to $R_{Q_{i}}$ we have that
$\cap_{m\in\mathbb{Z}_{>0}}(Q_{i}^{m}R_{Q_{i}})=0$. So $r\in\mathfrak{p_{i}}R_{Q_{i}}\cap R=\mathfrak{p}_{i}$,
where the last equality follows since $\mathfrak{p}_{i}$ is primary
(Proposition 4.8.ii, page 53, \citep{Atiyah-Macdonald}). 

Now, by Chevalley's theorem applied to $R/\mathfrak{p}_{i}$, there
exists a function $b_{i}:\mathbb{Z}_{>0}\to\mathbb{Z}_{>0}$ such
that $\mathfrak{q}_{i,b_{i}(N)}\subseteq\mathfrak{p}_{i}+\mathfrak{m}^{N}$
for all positive integers $N$. Let $J_{N}=\mathfrak{q}_{1,b_{1}(N)}\cap...\cap\mathfrak{q}_{n,b_{n}(N)}$.
Then $J_{N}\subseteq(\mathfrak{p}_{1}+\mathfrak{m}^{N})\cap...\cap(\mathfrak{p}_{n}+\mathfrak{m}^{N})$.
We claim that $J_{N}\subseteq I+\mathfrak{m}^{N-c}$ for $N\gg0$
and some positive integer $c<N$. We prove the claim by induction
on $n$. Suppose that $r\in(\mathfrak{p}_{1}+\mathfrak{m}^{N})\cap(\mathfrak{p}_{2}+\mathfrak{m}^{N})$.
Then we can write that $r=p_{1}+m_{1}=p_{2}+m_{2}$ for some $p_{i}\in\mathfrak{p}_{i}$
for $i=1,2$ and $m_{1},m_{2}\in\mathfrak{m}^{N}$. Then $m_{1}-m_{2}=p_{2}-p_{1}\in\mathfrak{m}^{N}\cap(\mathfrak{p}_{1}+\mathfrak{p}_{2})$.
By the Artin-Rees lemma, there exists a positive integer $c_{12}$
such that $\mathfrak{m}^{N}\cap(\mathfrak{p}_{1}+\mathfrak{p}_{2})=\mathfrak{m}^{N-c_{12}}(\mathfrak{m}^{c_{12}}\cap(\mathfrak{p}_{1}+\mathfrak{p}_{2}))\subseteq\mathfrak{m}^{N-c_{12}}(\mathfrak{p}_{1}+\mathfrak{p}_{2})=\mathfrak{m}^{N-c_{12}}\mathfrak{p}_{1}+\mathfrak{m}^{N-c_{12}}\mathfrak{p}_{2}$.
So $p_{2}-p_{1}\in\mathfrak{m}^{N-c_{12}}\mathfrak{p}_{1}+\mathfrak{m}^{N-c_{12}}\mathfrak{p}_{2}$.
Write $p_{2}-p_{1}=m_{1}^{'}+m_{2}^{'}$, where $m_{i}^{'}\in\mathfrak{m}^{N-c_{12}}\mathfrak{p}_{i}$
for $i=1,2$. Then $p_{1}+m_{1}^{'}=p_{2}-m_{2}^{'}$. Note that the
left hand side of this equation is an element of $\mathfrak{p}_{1}$
and the right hand side is an element of $\mathfrak{p}_{2}$. Thus,
each side is an element of $\mathfrak{p}_{1}\cap\mathfrak{p}_{2}$.
Now $r=p_{1}+m_{1}=(p_{1}+m_{1}^{'})-(m_{1}^{'}-m_{1})\in(\mathfrak{p}_{1}\cap\mathfrak{p}_{2})+\mathfrak{m}^{N-c_{12}}$.
Thus, $(\mathfrak{p}_{1}+\mathfrak{m}^{N})\cap(\mathfrak{p}_{2}+\mathfrak{m}^{N})\subseteq(\mathfrak{p}_{1}\cap\mathfrak{p}_{2})+\mathfrak{m}^{N-c_{12}}$
for $N\geq c_{12}$. Proceeding inductively, we can show that there
exists a positive integer $c$ such that for $N\geq c$, $(\mathfrak{p}_{1}+\mathfrak{m}^{N})\cap...\cap(\mathfrak{p}_{n}+\mathfrak{m}^{N})\subseteq(\mathfrak{p}_{1}\cap...\cap\mathfrak{p_{n}})+\mathfrak{m}^{N-c}$.
Consequently, $J_{N}\subseteq I+\mathfrak{m}^{N-c}$. Fix one such
$N\gg c$ and set $J=J_{N}$. 

By construction, $\text{dim}(R/J)<\text{dim}(R/I)$. Choose $u\in I^{(2)}$.
Then there exists $v\in R$ such that $v$ is not contained in any
minimal prime of $I$ and $uv\in I^{2}$. Since for all associated
primes $Q_{i}$ of $J$, $Q_{i}\subseteq P_{i}+\mathfrak{m}^{t}$,
by choosing $t\gg0$, we can ensure that $v$ is not contained in
any minimal prime of $J$. Then $vu\in J^{2}$ and $u\in J^{(2)}$.
Further, if $u\notin\mathfrak{m}I$, we claim that $u\notin\mathfrak{m}J$.
Suppose that $u\in\mathfrak{m}J$. Then, by the preceding paragraph,
$u\in\mathfrak{m}(I+\mathfrak{m}^{N-c})=\mathfrak{m}I+\mathfrak{m}^{N+1-c}$.
Write $u=v+w$, where $v\in\mathfrak{m}I$ and $w\in\mathfrak{m}^{N+1-c}$.
Since $u\in I^{(2)}\subseteq I$ and $v\in\mathfrak{m}I\subseteq I$,
$w\in I$. So $w\in\mathfrak{m}^{N+1-c}\cap I$. By the Artin-Rees
lemma there exists a positive integer $c^{'}$ such that for $N+1-c\geq c^{'}$,
$\mathfrak{m}^{N+1-c}\cap I=\mathfrak{m}^{N+1-c-c^{'}}(\mathfrak{m}^{c^{'}}\cap I)\subseteq\mathfrak{m}^{N+1-c-c^{'}}I\subseteq\mathfrak{m}I$.
Thus, for $N\gg0$, $u=v+w\in\mathfrak{m}I$, which contradicts the
choice of $u$. Hence, $u\in\mathfrak{m}J$. This proves the first
assertion in the proposition. 

If $I$ is height unmixed, then, all associated prime ideals of $I$
are minimal and have the same height. By choice of $Q_{i}$ all associated
prime ideals of $J$ will also have the same height, which is $1$
higher than the height of $I$. So $J$ is height unmixed. If $I$
is radical, we choose the primary decomposition of $I$ as an intersection
of it's minimal primes and choose the primary decomposition of $J$
as the intersection of the minimal $Q_{i}$. So $J$ is radical and
by the arguments in the preceding paragraph we obtain the desired
conclusion.
\end{proof}

We will now prove the last result referenced in Note \ref{special hypothesis}. 

Let $S=k[[t,x_{1},...,x_{m}]]$, where $k$ is a field and $P$ is
a prime ideal in $S$ such that $\text{dim}(S/P)=1$. Since $S/P$
is a one dimensional Noetherian complete local domain, its integral
closure, $\overline{S/P}$, is a one dimensional normal Noetherian
complete local domain (Theorem 2.2.5, page 31 and Theorem 4.3.4, page
62, \citep{Hunke-Swanson}) and hence regular (Theorem 14.1, page
198, \citep{kemper}). Thus, we can identify, $\overline{S/P}=k[[y]]$
for some indeterminate $y$ (Theorem 15, \citep{cohen}). Consequently,
we have an inclusion $F:S/P\hookrightarrow k[[y]]$. Under this inclusion,
let $t\mapsto y^{n}u$ and $x_{i}\mapsto g_{i}(y)$ for $1\leq i\leq m$,
where $n$ is a positive integer and $u$ is a unit in $k[[y]]$.
Write $u=u_{0}+f(y)$, where $f(y)$ is a power series in $y$ with
no constant term and $u_{0}\in k$. Suppose that $u_{0}$ has an $n$th
root in $k$ (in particular, this is true if $k$ is algebraically
closed and $n$ is invertible in $k$).
\begin{prop}
\label{prop: hypothesis is not special} With the notation as in the
preceding paragraph, let $R=k[[t^{\frac{1}{n}},x_{1},...,x_{m}]]$
and let $Q=(x_{1}-f_{1}(t^{\frac{1}{n}}),...,x_{m}-f_{m}(t^{\frac{1}{n}}))R$.
Then $P=Q\cap S$.\end{prop}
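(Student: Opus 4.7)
The plan is to construct a continuous local ring homomorphism $\phi: R \to k[[y]]$ that simultaneously extends $F$ (the map $S \to k[[y]]$ of kernel $P$) and has kernel exactly $Q$; the conclusion $P = Q \cap S$ then follows immediately by restriction.

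First I would construct an $n$th root of $u$ in $k[[y]]$. Writing $u = u_0\bigl(1 + f(y)/u_0\bigr)$ with $f(y) \in (y)k[[y]]$, the binomial series $(1+z)^{1/n} = \sum_{k\ge 0}\binom{1/n}{k} z^k$ makes sense in characteristic zero and, evaluated at $z = f(y)/u_0$, gives a well-defined unit in $k[[y]]$. Multiplying by $\alpha \in k$ with $\alpha^n = u_0$ yields $u^{1/n} \in k[[y]]$ with $(u^{1/n})^n = u$ and constant term $\alpha \neq 0$. Consequently the substitution $z \mapsto y u^{1/n}$ sends $z$ to an element of $(y)k[[y]]$ with nonzero linear coefficient, so by the formal inverse function theorem it defines an isomorphism $k[[z]] \to k[[y]]$; in particular there is a unique $h(z) \in z \cdot k[[z]]$ with $h(y u^{1/n}) = y$.

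Next I would define $f_i(z) := g_i(h(z)) \in k[[z]]$ and set $Q := (x_1 - f_1(t^{1/n}), \ldots, x_m - f_m(t^{1/n}))R$. Let $\phi: R \to k[[y]]$ be the continuous $k$-algebra map with $\phi(t^{1/n}) = y u^{1/n}$ and $\phi(x_i) = g_i(y)$; it is well defined since both targets lie in $(y)k[[y]]$. By construction $\phi(t) = (y u^{1/n})^n = y^n u = F(t)$ and $\phi(x_i) = F(x_i)$, so $\phi|_S = F$. Moreover $\phi(x_i - f_i(t^{1/n})) = g_i(y) - g_i(h(y u^{1/n})) = 0$, hence $Q \subseteq \ker(\phi)$. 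For equality, $R/Q \cong k[[t^{1/n}]]$ via $x_i \mapsto f_i(t^{1/n})$, and the induced map $R/Q \to k[[y]]$ is $t^{1/n} \mapsto y u^{1/n}$, which is an isomorphism by the previous paragraph; hence $\ker(\phi) = Q$. Intersecting with $S$ gives $Q \cap S = \ker(\phi) \cap S = \ker(F) = P$.

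The chief obstacle is the first step: producing $u^{1/n}$ and inverting the change of variable $y \mapsto y u^{1/n}$. Both hinge on the hypothesis that $u_0$ admits an $n$th root in $k$, together with characteristic zero to justify the binomial expansion. Once these are in hand, the remainder of the argument is a formal kernel computation and does not use any deeper structure of $S$ or $R$.
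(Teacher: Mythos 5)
Your proof is correct and takes essentially the same approach as the paper: produce an $n$th root of the unit $u$ in $k[[y]]$, use the change of variable $y \mapsto yu^{1/n}$ to normalize the image of $t$ to a pure $n$th power, and then identify the kernel of the resulting map $R \to k[[y]]$ as $Q$ and restrict to $S$. If anything your write-up is slightly more careful than the paper's: you construct $u^{1/n}$ explicitly via the binomial series, and you work directly with the map $\phi$ extending $F$ (with $\phi(t^{1/n}) = yu^{1/n}$) rather than first postcomposing $F$ with the normalizing substitution, which sidesteps the paper's somewhat imprecise description of the automorphism $V(y) = yv^{-1}$ (where $v$ is a power series, not a constant, so the stated formula does not literally send $y^n u$ to $y^n$ — one needs the compositional inverse of $y \mapsto yv(y)$, as you implicitly use via $h$).
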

\begin{proof}
Now $u$ has an $n$th root in $k[[y]]$, say, $v^{n}=u$. Consider
the automorphism $V:k[[y]]\to k[[y]]$ given by $V(y)=yv^{-1}$. Then
we have an injective map $V\circ F:S/P\hookrightarrow k[[y]]$, where
$t\mapsto y^{n}$ and $x_{i}\mapsto g_{i}(yv^{-1})$. Set $f_{i}(y):=g_{i}(yv^{-1})$.
This induces a surjective map $G:R=k[[t^{\frac{1}{n}},x_{1},...,x_{m}]]\to k[[y]]$,
where $t^{\frac{1}{n}}\mapsto y$ and $x_{i}\mapsto f_{i}(y)$. Suppose
that the kernel of $G$ is $Q$. Note that $Q$ is prime in $R$ since
$k[[y]]$ is a domain. Further, since the restriction of $G$ to $S$
is the map sending $t\mapsto y^{n}$ and $x_{i}\mapsto f_{i}(y)$,
the kernel of $G|_{S}$ is $P$. Thus, $Q\cap S=P$. Finally, we have
that $Q=(x_{1}-f_{1}(t^{\frac{1}{n}}),...,x_{m}-f_{m}(t^{\frac{1}{n}}))R$
since this ideal is clearly in the kernel of $G$ by definition and
the quotient of $R$ modulo this ideal is precisely $k[[t]]$. This
proves our claim. 
\end{proof}
In particular, we have shown that if $S=k[[t,x_{1},...,x_{m}]]$,
where $k$ is a characteristic $0$ algebraically closed field and
$P$ is a prime in $S$ with $\text{dim}(S/P)=1$, then, there exists
a positive integer $n$ such that for $R=k[[t^{\frac{1}{n}},x_{1},...,x_{m}]]$,
there exists a prime ideal $Q=(x_{1}-f_{1}(t^{\frac{1}{n}}),...,x_{m}-f_{m}(t^{\frac{1}{n}}))R$
that contracts to $P$ in $S$.

\subsection{Problem set-up}

Let $R=k[[t,x_{1},...,x_{m}]]$ be the formal power series ring over
a field $k$ of characteristic $0$ in $m+1$ indeterminates. Then
the fraction field of $R$, say $K$, is $k((t,x_{1},...,x_{m}))$,
the ring of formal Laurent series over the same indeterminates. Let
$n>1$ be a positive integer and let $S=k[[t^{n},x_{1},...,x_{m}]]$,
where $n>1$ is a positive integer. The fraction field of $S$ can
be identified with $L=k((t^{n},x_{1},...,x_{m}))$. Note that that
the integral closure of $S$ in $L$ is $R$. 

Now assume that $k$ is algebraically closed if $n>2$ . Then $K/L$
is a Galois extension. Consider the automorphisms $\sigma_{j}$ of
$K$, where $\sigma_{j}(t)=\zeta^{j-1}t$ for $j=1,...,n$, where
$\zeta$ is a primitive $n$th root of unity and $\sigma_{j}(x_{i})=x_{i}$
for $1\leq i\leq m$. Any automorphism of $K$ that fixes $L$ must
fix each $x_{i}$ and must map an $n$th root of $t^{n}$ to another
$n$th root of $t^{n}$ and hence must map $t$ to $\zeta^{j-1}t$
for some $j\in\{1,...,n\}$. Thus, the Galois group $G$ of $K/L$
is $\{\sigma_{1},...,\sigma_{n}\}$. 

Consider any prime ideal $\mathfrak{p}$ in $S$. Suppose that $\mathfrak{Q}=\{\mathfrak{q}_{1},...,\mathfrak{q}_{l}\}$
is the set of prime ideals of $R$ lying over $\mathfrak{p}$. Then
$G$ acts transitively on $\mathfrak{Q}$ (Proposition VII.2.1, page
340, \citep{lang}). 

Now let $f_{1}(t),...,f_{m}(t)\in k[[t]]$ and let $Q_{1}=(x_{1}-f_{1}(t),...,x_{m}-f_{m}(t))R$.
Then $Q_{1}$ is a prime ideal as $R/Q_{1}=k[[t]]$, which is a domain.
If $P=Q_{1}\cap S$, the set of primes lying over $P$ are $Q_{j}=(x_{1}-f_{1}(\zeta^{j-1}t),...,x_{m}-f_{m}(\zeta^{j-1}t))R$
for $1\leq j\leq n$ since $G$ acts transitively on the set of primes
lying over $P$. It follows that $Q_{j}\cap S=P=(Q_{1}\cap S)\cap...\cap(Q_{n}\cap S)=(Q_{1}\cap...\cap Q_{n})\cap S$
for $1\leq j\leq n$.

To compute the generators of $P$, we will make use of the \emph{normalized
trace map} $\text{tr}:L\to K$, where for $a\in L$ we have that 
\[
\text{\text{tr}(a})=\frac{1}{n}(\sigma_{1}(a)+...+\sigma_{n}(a)).
\]

Note that by the above discussion, since $G$ acts transitively on
the set $\{Q_{1},...,Q_{n}\}$, $\sigma_{j}$ stabilizes $Q_{1}\cap...\cap Q_{n}$
for $j=1,...,n$ and thus, $\text{tr}(Q_{1}\cap...\cap Q_{n})\subseteq Q_{1}\cap...\cap Q_{n}$.
Further, since $\text{tr}(\cdot)|_{R}$ has range $S$, $\text{tr}(Q_{1}\cap...\cap Q_{n})\subseteq S$.
Consequently, we have that $ $$\text{tr}(Q_{1}\cap...\cap Q_{n})=(Q_{1}\cap...\cap Q_{n})\cap S=P$.

We are now ready to show that under the hypothesis of Theorem \ref{main theorem}
(which is a special case of the above set up for $n=2$), $P^{(2)}\subseteq(Q_{1}^{2}\cap Q_{2}^{2})\cap S$. 
\begin{prop}
\label{prop:containment of symbolic power in contraction} Let $k$
be a field (not necessarily of characteristic 0), $R=k[[t,x_{1},...,x_{m}]]$,
$S=k[[t^{n},x_{1},...,x_{m}]]$. Let $f_{1}(t),...,f_{m}(t)\in k[[t]]$.
If $n>2$, assume that $k$ is algebraically closed and let $Q_{j}=(x_{1}-f_{1}(\zeta^{j-1}t),...,x_{m}-f_{m}(\zeta^{j-1}t))R$,
where $\zeta$ is a primitive $n$th root of unity and $j\in\{1,...,n\}$.
Then $P^{(l)}\subseteq(Q_{1}^{l_{1}}\cap...\cap Q_{n}^{l_{n}})\cap S$,
where $P=(Q_{1}\cap...\cap Q_{n})\cap S$ and $l\geq\text{max}\{l_{1},...,l_{n}\}$.\end{prop}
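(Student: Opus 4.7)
The plan is to pick an arbitrary $r \in P^{(l)}$ and show that $r \in Q_j^{l_j}$ for each $j \in \{1,\ldots,n\}$; together with the fact that $r \in S$, this delivers the claimed containment. Since $P$ is prime, its only associated prime is itself, so by the definition of symbolic power the hypothesis unwraps to: there exists $s \in S \setminus P$ with $sr \in P^l$ in $S$.

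The bridge to each $R_{Q_j}$ is the standard lying-over bookkeeping. Since $Q_j \cap S = P$, every element of $S \setminus P$ lies in $R \setminus Q_j$, so in particular $s \in R \setminus Q_j$. Combining $sr \in P^l$ in $S$ with $P \subseteq Q_j$ in $R$ gives $sr \in Q_j^l$ in $R$. Passing to $R_{Q_j}$ and dividing by the unit image of $s$, and using the hypothesis $l \geq l_j$, this yields $r/1 \in Q_j^l R_{Q_j} \subseteq Q_j^{l_j} R_{Q_j}$.

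To descend back to $R$, I would use $Q_j^{l_j} R_{Q_j} \cap R = Q_j^{l_j}$, which holds precisely when $Q_j^{l_j}$ is $Q_j$-primary. The decisive structural input is that $Q_j$ is generated by the $m$-element sequence $x_1 - f_1(\zeta^{j-1}t), \ldots, x_m - f_m(\zeta^{j-1}t)$, which is part of a regular system of parameters of the regular local ring $R$ (the quotient $R/Q_j \cong k[[t]]$ has dimension exactly $\dim R - m$). Since $R$ is Cohen-Macaulay this sequence is regular, whence $\operatorname{gr}_{Q_j}(R)$ is a polynomial ring over the domain $R/Q_j$. Consequently every power $Q_j^{l_j}$ has $Q_j$ as its unique associated prime, and the desired contraction identity follows. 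Intersecting over $j$ and recalling $r \in S$ closes the argument.

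I do not anticipate a genuine obstacle: the proof is essentially a diagram chase through two localizations, and the only non-formal ingredient is the fact that powers of an ideal generated by a regular sequence in a regular local ring are primary, which is a standard consequence of the Cohen-Macaulay property. The main point worth highlighting in the write-up is the role played by the containment $S \setminus P \subseteq R \setminus Q_j$, since this is what allows information about the extension of $r$ in $S_P$ to be read off inside $R_{Q_j}$.
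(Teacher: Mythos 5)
Your proof is correct, and it is genuinely more direct than the paper's. The key structural input is shared: both arguments rely on the fact that $Q_j$, being generated by a regular sequence in the regular local ring $R$, has $Q_j$-primary powers (the paper cites this as $Q_j^{(r)}=Q_j^r$; you derive it via the associated graded ring). Where the approaches diverge is in how that fact is deployed. The paper first shows that $\mathfrak{q}=(Q_1^{l_1}\cap\cdots\cap Q_n^{l_n})\cap S$ is a $P$-primary ideal of $S$ containing $P^l$, and then runs a careful argument with irredundant primary decompositions of $P^l$ to force $P^{(l)}\subseteq\mathfrak{q}$. You instead unwind the localization definition of $P^{(l)}$ directly: take $r\in P^{(l)}$, produce $s\in S\setminus P$ with $sr\in P^l$, observe that $Q_j\cap S=P$ forces $s\in R\setminus Q_j$, push into $R_{Q_j}$ where $s$ becomes a unit, and then contract back using $Q_j^{l_j}R_{Q_j}\cap R=Q_j^{l_j}$ (valid precisely because $Q_j^{l_j}$ is $Q_j$-primary). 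This sidesteps primary decomposition entirely. What your approach buys is brevity and transparency: it isolates the containment $S\setminus P\subseteq R\setminus Q_j$ as the decisive bridge between the two localizations, rather than routing through uniqueness properties of primary components. What the paper's approach offers in exchange is that it explicitly exhibits $\mathfrak{q}$ as a $P$-primary ideal of $S$, a structural fact that is of independent interest in this context even if not strictly needed for the stated containment.
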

\begin{proof}
The sequences $X_{j}=x_{1}-f_{1}(\zeta^{j-1}t),...,x_{m}-f_{m}(\zeta^{j-1}t)$
are regular sequences in $R$ for $1\leq j\leq n$, since $R/(x_{1}-f_{1}(\zeta^{j-1}t),...,x_{i}-f_{i}(\zeta^{j-1}t))R\cong k[[t,x_{i+1},...,x_{m}]]$.
Being a domain, the latter ring has no non-zero zero-divisors and
the class of $x_{i+1}-f_{i+1}(\zeta^{j-1}t)$ is not zero in this
ring for $0\leq i\leq m-1$. Also, by the same token, the ideals $Q_{j}$
are prime for $1\leq j\leq n$. Then we have that $Q_{j}^{(r)}=Q_{j}^{r}$
for every positive integer $r$ (result 2.1, \citep{Hochster}). Thus,
the ideals $Q_{j}^{r}$ are primary for every positive integer $r$
and $1\leq j\leq n$. Now the contraction of a primary ideal is primary
(Proposition 4.8, page 53, \citep{Atiyah-Macdonald}). Consequently,
the ideals $Q_{j}^{r}\cap S$ are primary in $S$.

Now $\sqrt{Q_{j}^{r}\cap S}=\sqrt{Q_{j}^{r}}\cap S=Q_{j}\cap S=P$
(exercise 1.13, page 9 and exercise 1.18 page 10, \citep{Atiyah-Macdonald}).
Thus, the ideals $Q_{j}^{l_{j}}\cap S$ are all $P$-primary. Hence
$(Q_{1}^{l_{1}}\cap...\cap Q_{n}^{l_{n}})\cap S$ is $P$-primary
(lemma 4.3, page 51, \citep{Atiyah-Macdonald}). Further, $P^{l}=((Q_{1}\cap...\cap Q_{n})\cap S)^{l}\subseteq(Q_{1}\cap...\cap Q_{n})^{l}\cap S\subseteq(Q_{1}^{l}\cap...\cap Q_{n}^{l})\cap S\subseteq(Q_{1}^{l_{1}}\cap...\cap Q_{n}^{l_{n}})\cap S$.
Let $\mathfrak{q}=(Q_{1}^{l_{1}}\cap...\cap Q_{n}^{l_{n}})\cap S$.

Finally, for any irredundant primary decomposition of $P^{l}$, the
$P$-primary ideal that must be used is $P^{(l)}$. Suppose that $P^{l}=P^{(l)}\cap P_{1}\cap...\cap P_{r}$
be an irredundant primary decomposition, where the $P_{1},...,P_{r}$
are primary ideals. Then $\sqrt{P_{1}},...,\sqrt{P_{r}}$ are all
distinct and are distinct from $P$. Further, $P^{(l)}\nsupseteq\cap_{i=1}^{r}P_{i}$
and $P_{i^{'}}\nsupseteq P^{(l)}\cap_{i=1,i\neq i^{'}}^{r}P_{i}$
for $1\leq i^{'}\leq r$. We claim that $P^{l}=(P^{(l)}\cap\mathfrak{q})\cap P_{1}\cap P_{2}\cap...\cap P_{r}$
is also an irredundant primary decomposition (note that $P^{l}=P^{l}\cap\mathfrak{q}$
as $P^{l}\subseteq\mathfrak{q}$ by the preceding paragraph). For
$P^{(l)}$ and $\mathfrak{q}$ are both $P$-primary and hence so
is $P^{(l)}\cap\mathfrak{q}$. So the radicals of all ideals appearing
in the decomposition are all distinct. Also, $P^{(l)}\cap\mathfrak{q}\nsupseteq\cap_{i=1}^{r}P_{i}$
follows from $P^{(l)}\nsupseteq\cap_{i=1}^{r}P_{i}$. Suppose that
$P_{i^{'}}\supseteq(P^{(l)}\cap\mathfrak{q})\cap_{i=1,i\neq i^{'}}^{r}P_{i}=(P^{(l)}\cap_{i=1,i\neq i^{'}}^{r}P_{i})\cap\mathfrak{q}$.
Then, since $P_{i^{'}}\nsupseteq P^{(l)}\cap_{i=1,i\neq i^{'}}^{r}P_{i}$,
we must have $P_{i^{'}}\supseteq\mathfrak{q}$ (Proposition 1.11.(ii),
page 8, \citep{Atiyah-Macdonald}). However, taking radicals, we get
that $\sqrt{\mathfrak{q}}=P\subseteq\sqrt{P_{i^{'}}}=P_{i^{'}}$,
which is a contradiction. Thus, $P_{i^{'}}\nsupseteq(P^{(l)}\cap\mathfrak{q})\cap_{i=1,i\neq i^{'}}^{r}P_{i}$.
So the new primary decomposition is indeed irredundant. Now, since
the $P$-primary component in any primary decomposition of $P^{l}$
must be $P^{(l)}$, we have that $P^{(l)}=P^{(l)}\cap\mathfrak{q}$.
So $P^{(l)}\subseteq\mathfrak{q}=(Q_{1}^{l_{1}}\cap...\cap Q_{n}^{l_{n}})\cap S$.\end{proof}
\begin{cor}
\label{cor:symbolic power containment} Let $k$ be a field (not necessarily
of characteristic 0, $R=k[[t,x_{1},...,x_{m}]]$, $S=k[[t^{2},x_{1},...,x_{m}]]$.
Let $f_{1}(t),...,f_{m}(t)\in k[[t]]$. Let $Q_{1}=(x_{1}-f_{1}(t),...,x_{m}-f_{m}(t))R$
and \textup{$Q_{2}=(x_{1}-f_{1}(-t),...,x_{m}-f_{m}(-t))R$}. Then
$P^{(2)}\subseteq(Q_{1}^{2}\cap Q_{2}^{2})\cap S$, where $P=(Q_{1}\cap Q_{2})\cap S$.\end{cor}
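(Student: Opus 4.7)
The plan is to derive this corollary as the immediate $n=2$ specialization of Proposition~\ref{prop:containment of symbolic power in contraction}. With $n=2$, the only primitive second root of unity in $k$ is $\zeta=-1$, so the generic ideals $Q_j=(x_1-f_1(\zeta^{j-1}t),\dots,x_m-f_m(\zeta^{j-1}t))R$ from the proposition specialize to exactly the ideals $Q_1=(x_1-f_1(t),\dots,x_m-f_m(t))R$ and $Q_2=(x_1-f_1(-t),\dots,x_m-f_m(-t))R$ appearing in the corollary, and the prime $P=(Q_1\cap Q_2)\cap S$ agrees with the proposition's $P$.

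Next I would check that the hypotheses line up. Proposition~\ref{prop:containment of symbolic power in contraction} only imposes the extra condition that $k$ be algebraically closed when $n>2$; for $n=2$ no such restriction is needed, which matches the corollary's hypothesis that $k$ is an arbitrary field (of any characteristic). With the setup verified, I would invoke Proposition~\ref{prop:containment of symbolic power in contraction} with the exponent parameters $l=l_1=l_2=2$, so that the required inequality $l\geq\max\{l_1,l_2\}$ is trivially satisfied, yielding $P^{(2)}\subseteq (Q_1^2\cap Q_2^2)\cap S$.

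Since the corollary is by design a direct specialization of the preceding proposition, there is no substantive obstacle in the proof; the only minor point worth noting is the identification $\zeta=-1$ as a primitive second root of unity, and in characteristic $2$ one has $Q_1=Q_2$ so the conclusion collapses to $P^{(2)}\subseteq Q_1^2\cap S$, which is still a valid instance of the proposition (with the indices identified). The proof is therefore essentially a one-line reference to Proposition~\ref{prop:containment of symbolic power in contraction}.
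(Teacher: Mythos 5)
Your proof is correct and is essentially identical to the paper's, which likewise cites Proposition~\ref{prop:containment of symbolic power in contraction} with $n=2$ and $l=l_1=l_2=2$ (the paper's ``$l_0=l_1=l=2$'' is a typo for $l_1=l_2=l=2$). The observation about $\zeta=-1$ and the characteristic-$2$ degeneration is a fine side remark but not part of the paper's one-line argument.
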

\begin{proof}
This is a direct consequence of Proposition \ref{prop:containment of symbolic power in contraction}
with $n=2$, $l_{0}=l_{1}=l=2$.
\end{proof}

We will prove the stronger containment $(Q_{1}^{2}\cap Q_{2}^{2})\cap S\subseteq\mathfrak{m}P$
in the next sections, which will imply the Eisenbud-Mazur conjecture
in this case by corollary \ref{cor:symbolic power containment}.

\subsection{\label{section with notation change} Computing generators of $P$}

Let the notation be as in the statement of Theorem \ref{main theorem}.
We may assume that at least one of the power series $f_{1}(t),...,f_{m}(t)$
is not in $S$. Otherwise, $f_{i}(t)=f_{i}(-t)$ for $i=1,...,m$
(since $t^{2}\in S$) and 
\[
Q_{2}=(x_{1}-f_{1}(-t),...,x_{m}-f_{m}(-t))R=(x_{1}-f_{1}(t),...,x_{m}-f_{m}(t))R=Q_{1}.
\]
Thus, $P=Q_{1}\cap S=(x_{1}-f_{1}(t),...,x_{m}-f_{m}(t))S$ is a complete
intersection. Then, $P^{(2)}=P^{2}\subseteq\mathfrak{m}P$ (Result
2.1, \citep{Hochster}).

Without loss of generality, we may assume that $(f_{1}(t)-f_{1}(-t))|(f_{i}(t)-f_{i}(-t))$
and set $g_{i}(t)=\frac{(f_{i}(t)-f_{i}(-t))}{(f_{1}(t)-f_{1}(-t))}$
for $i=1,...,m$ (else we may renumber so that the leading term of
$f_{1}(t)-f_{1}(-t)$ has the least non-zero degree). Set $a_{i}=x_{i}-f_{i}(t)$
and $b_{i}=x_{i}-f_{i}(-t)$ for $i=1,...,m$.
\begin{prop}
\label{Q1 meet Q2} With the notation as in the preceding paragraph,
$P=(Q_{1}\cap Q_{2})\cap S=(\{(-b_{i}+b_{1}g_{i}(t)):i=2,...,m\})S+(\{\text{tr}((x_{i}-f_{i}(t))(x_{j}-f_{j}(-t))):i,j\in\{1,...,m\}\})S.$ \end{prop}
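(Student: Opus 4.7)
The plan is to exploit the identity $P = \text{tr}(Q_{1}\cap Q_{2})$ derived in the Problem set-up, together with the direct-sum decomposition $R = S \oplus tS$ of $R$ as an $S$-module. Setting $c_{i} := -b_{i} + b_{1} g_{i}(t)$ and $h := f_{1}(t) - f_{1}(-t)$, I first record that $h$ and each $f_{i}(t) - f_{i}(-t)$ are odd in $t$, so their quotient $g_{i}(t)$ lies in $k[[t^{2}]] \subseteq S$. The argument then splits into the two containments.

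For the easy inclusion (each listed generator lies in $P$), the defining relation $a_{i} - b_{i} = (a_{1} - b_{1})\,g_{i}(t)$ rearranges to the alternate form $c_{i} = a_{1} g_{i}(t) - a_{i}$, which shows $c_{i} \in Q_{1}\cap Q_{2}$; applying $\sigma$ to this alternate form and using $\sigma(g_{i})=g_{i}$ returns the original expression for $c_{i}$, so $c_{i}$ is $\sigma$-fixed and therefore in $S$, giving $c_{i} \in P$. The generators $\text{tr}(a_{i}b_{j})$ lie in $P$ automatically, because $a_{i}b_{j} \in Q_{1}Q_{2} \subseteq Q_{1}\cap Q_{2}$ and the trace carries $Q_{1}\cap Q_{2}$ into $P$.

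For the reverse inclusion, I would take any $r \in Q_{1}\cap Q_{2}$, write $r = \sum_{i}\alpha_{i} a_{i}$, and reduce modulo $Q_{2}$: since $a_{i} \equiv -h g_{i} \pmod{Q_{2}}$, membership $r \in Q_{2}$ forces $h \sum_{i}\alpha_{i} g_{i} \in Q_{2}$. Because $R/Q_{2} \cong k[[t]]$ and $h$ has nonzero image there, primality of $Q_{2}$ yields $\sum_{i}\alpha_{i} g_{i} \in Q_{2}$. Using $g_{1}=1$ to solve for $\alpha_{1}$ modulo $Q_{2}$ and resubstituting produces
\[
r \;=\; -\sum_{i=2}^{m}\alpha_{i}\, c_{i} \;+\; a_{1}\sum_{j=1}^{m}\beta_{j}\, b_{j}
\]
for suitable $\beta_{j} \in R$. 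Taking the trace, the first sum contributes $-\sum \text{tr}(\alpha_{i})\, c_{i}$ (since $c_{i} \in S$), and decomposing $\beta_{j} = e_{j} + t s_{j}$ with $e_{j}, s_{j} \in S$ converts $\text{tr}(\beta_{j} a_{1} b_{j})$ into $e_{j}\,\text{tr}(a_{1}b_{j}) + \tfrac{t s_{j}}{2}(a_{1}b_{j} - b_{1}a_{j})$.

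The principal computational step, and the main obstacle, is the identity
\[
a_{1} b_{j} - b_{1} a_{j} \;=\; h \cdot c_{j},
\]
which I would verify by direct expansion using $h\,g_{j}(t) = f_{j}(t) - f_{j}(-t)$. This is precisely what absorbs the antisymmetric contribution from each $\beta_{j} a_{1} b_{j}$ into an $S$-multiple of $c_{j}$, noting that $t h \in t^{2} k[[t^{2}]] \subseteq S$. With the identity in hand, $\text{tr}(r)$ becomes an explicit $S$-linear combination of the $c_{i}$ and the $\text{tr}(a_{i}b_{j})$, completing both inclusions. Without this algebraic identity the generating set would have to be enlarged by extra antisymmetric products; its availability is exactly what makes the clean finite generating set in the proposition correct.
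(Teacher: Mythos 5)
Your proof is correct and follows the same underlying strategy as the paper — reduce a general element of $Q_1\cap Q_2$ modulo the other prime, invoke primality to force $\sum \alpha_i g_i$ into that prime, and extract the generators $c_i = h_i$ from the relations $(g_i,0,\dots,-1,\dots,0)$ among the $g_i$. The substantive difference is at the final trace step. The paper derives $Q_1\cap Q_2 = (h_2,\dots,h_m)R + Q_1Q_2$ and then passes directly to $P = (h_2,\dots,h_m)S + (\{\mathrm{tr}(a_ib_j)\})S$; but since the $R$-module generators of $Q_1Q_2$ carry $R$-coefficients rather than $S$-coefficients, and $\mathrm{tr}$ is only $S$-linear, it is not immediate that $\mathrm{tr}(Q_1Q_2)$ collapses to the $S$-span of $\{\mathrm{tr}(a_ib_j)\}$ together with the $h_i$. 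You close exactly this gap: by splitting $R = S\oplus tS$, you see that $\mathrm{tr}$ on the $tS$-part produces $\tfrac{t}{2}(a_1b_j - b_1a_j)$, and the clean identity $a_1 b_j - b_1 a_j = h\,c_j$ — which one can also verify quickly from $a_j = a_1 g_j - c_j$ and $b_j = b_1 g_j - c_j$, giving $a_1 b_j - b_1 a_j = (b_1-a_1)c_j = hc_j$ — shows these terms land in $(th/2)\,S\,c_j \subseteq (c_j)S$ because $th \in k[[t^2]] \subseteq S$. So your write-up is not merely a restatement but a tightened version of the paper's argument, and the identity you isolate is exactly what is needed to make the paper's final "Therefore" rigorous.
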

\begin{proof}
We have that $b_{i}-a_{i}=f_{i}(t)-f_{i}(-t)=(f_{1}(t)-f_{1}(-t))g_{i}(t)=(b_{1}-a_{1})g_{i}(t)$
for $i=1,...,m$.

Let $u\in Q_{1}\cap Q_{2}$. Then, for some $r_{i},s_{i}\in R$, we
may write that $u=\Sigma_{i=1}^{m}r_{i}a_{i}=\Sigma_{i=1}^{m}s_{i}b_{i}=\Sigma_{i=1}^{m}s_{i}(a_{i}+(b_{i}-a_{i}))=\Sigma_{i=1}^{m}s_{i}(a_{i}+(b_{1}-a_{1})g_{i}(t))$.
Hence, $\Sigma_{i=1}^{m}(r_{i}-s_{i})a_{i}=(b_{1}-a_{1})\Sigma_{i=1}^{m}s_{i}g_{i}(t)$.
Note the left hand side of the last equation lies in $Q_{1}$.

Thus, elements of $Q_{1}\cap Q_{2}$ are determined by elements $s_{i}\in R$
such that $\Sigma_{i=1}^{n}s_{i}g_{i}(t)\in Q_{1}:(b_{1}-a_{1})$,
as given any $s_{i}$ satisfying this condition, we may determine
the $r_{i}$ from the preceding equation, thus obtaining an element
of $Q_{1}\cap Q_{2}$.

Since $Q_{1}$ is a prime and $(b_{1}-a_{1})\notin Q_{1}$, we have
that $Q_{1}:(b_{1}-a_{1})=Q_{1}$. So elements of $Q_{1}\cap Q_{2}$
are determined by elements $s_{i}\in R$ such that $\Sigma_{i=1}^{m}s_{i}g_{i}(t)\in Q_{1}$.
Modulo $Q_{1}$, these are the preimages of the elements defining
the relations between $g_{i}(t)$ in $R/Q_{1}=k[[t]]$. Thus, $Q_{1}\cap Q_{2}$
is generated by elements $\Sigma_{i=1}^{m}s_{i}b_{i}$, where $\Sigma_{i=1}^{m}s_{i}g_{i}(t)$
represents the zero element in $R/Q_{1}$. 

Given elements $w_{1},...,w_{d}\in k[[t]]$ we define a \emph{relation}
among these elements to be a $d\text{-}$tuple $(\alpha_{1},...,\alpha_{d})\in k[[t]]^{d}$
such that $\alpha_{1}w_{1}+...+\alpha_{d}w_{d}=0$. The set of such
elements is a submodule of $k[[t]]^{n}$, which we shall call the
\emph{module of relations. }Note that since $k[[t]]$ is a principal
ideal domain, the module of relations is a free $k[[t]]\text{-}$module.
Now the module of relations between the $g_{i}(t)$ in $R/Q_{1}=k[[t]]$
is generated by the following $m\text{-}$tuples (note that $g_{1}(t)=1$):
\[
(g_{i}(t),0,0,...,\underbrace{-1}_{i^{\text{th }}\text{place}},0,0,...,0)\text{\text{ (where \ensuremath{i=2,....,m})\text{}}}.
\]

Let, $h_{i}=-b_{i}+b_{1}g_{i}(t)=-a_{i}+a_{1}g_{i}(t)\in Q_{1}\cap Q_{2}$
(where the equality follows from $b_{i}-a_{i}=(b_{1}-a_{1})g_{i}(t)$).
The set of $m\text{-}$tuples above correspond to the elements $g_{i}(t)b_{1}+0b_{2}+0b_{3}+...+(-1)b_{i}+0b_{i+1}+0b_{i+2}+...+0b_{m}=b_{1}g_{i}(t)-b_{i}=h_{i}$
in $Q_{1}\cap Q_{2}$. Thus, $Q_{1}\cap Q_{2}=(h_{2},h_{3},...,h_{m})R+Q_{1}Q_{2}$. 

Now we compute $(Q_{1}\cap Q_{2})\cap S=\text{tr}(Q_{1}\cap Q_{2})$.
We have that $\text{tr}(h_{i})=\text{tr}(-b_{i}+b_{1}g_{i}(t))=\text{tr}(-x_{i}+f_{i}(-t)+(x_{1}-f_{1}(-t))g_{i}(t))$.
Now $g_{i}(-t)=\frac{(f_{i}(-t)-f_{i}(t))}{(f_{1}(-t)-f_{1}(t))}=\frac{(f_{i}(t)-f_{i}(-t))}{(f_{1}(t)-f_{1}(-t))}=g_{i}(t)$.
Hence, 
\begin{eqnarray*}
\text{tr}(h_{i}) & = & \frac{1}{2}((-x_{i}+f_{i}(-t)+(x_{1}-f_{1}(-t))g_{i}(t)))\\
 &  & +\frac{1}{2}((-x_{i}+f_{i}(t)+(x_{1}-f_{1}(t))g_{i}(t)))\\
 & = & \frac{1}{2}((-b_{i}+b_{1}g_{i}(t))+(-a_{i}+a_{1}g_{i}(t)))\\
 & = & \frac{1}{2}(2(-b_{i}+b_{1}g_{i}(t)))\\
 & = & -b_{i}+b_{1}g_{i}(t)\\
 & = & h_{i}
\end{eqnarray*}

Next, we have that $Q_{1}Q_{2}=(\{(x_{i}-f_{i}(t))(x_{j}-f_{j}(-t)):i,j\in\{1,...,m\}\})R$.
Therefore, $(P=(h_{2},h_{3},...,h_{m})S+(\{\text{tr}((x_{i}-f_{i}(t))(x_{j}-f_{j}(-t))):i,j\in\{1,...,m\}\})S$.
\end{proof}

\subsection{\label{special case q12 cap q22}Computing generators of $Q_{1}^{2}\cap Q_{2}^{2}$
(special case)}

Again, assume the hypothesis of Theorem \ref{main theorem}. For the
purpose of computing generators of $Q_{1}^{2}\cap Q_{2}^{2}$ we show
that it is sufficient to consider power series $f_{1}(t),...,f_{m}(t)$
containing only odd powers of $t$, \emph{i.e.,} those $f_{i}(t)$
that satisfy $f_{i}(-t)=-f_{i}(t)$ for $1\leq i\leq m$. Suppose
that $f_{i}(t)=\Sigma_{h=0}^{\infty}a_{i,h}t^{h}$. Then we may write
that $ $$f_{i}(t)=\Sigma_{h=0}^{\infty}a_{i,2h}t^{2h}+\Sigma_{h=0}^{\infty}a_{i,2h+1}t^{2h+1}$.
Set $f_{i,e}=\Sigma_{h=0}^{\infty}a_{i,2h}t^{2h}$ and $f_{i,o}=\Sigma_{h=0}^{\infty}a_{i,2h+1}t^{2h+1}$
for the even and odd parts of $f_{i}$ respectively. Consider the
automorphism $\sigma$ of $R$: $\sigma(x_{i})=x_{i}-f_{i,e}(t)$
and $\sigma(t)=t$. We have that $Q_{1}=(x_{1}-f_{1}(t),...,x_{m}-f_{m}(t))R=(x_{1}-f_{1,e}(t)-f_{1,o}(t),...,x_{m}-f_{m,e}(t)-f_{m,o}(t))R$.
So that $\sigma(Q_{1})=(x_{1}-f_{1,o}(t),x_{2}-f_{2,o}(t),...,x_{m}-f_{m,o}(t))R$.
Similarly, we have that $Q_{2}=(x_{1}-f_{1}(-t),...,x_{m}-f_{m}(-t))R=(x_{1}-f_{1,e}(t)+f_{1,o}(t),...,x_{m}-f_{m,e}(t)+f_{m,o}(t))R$.
So that $\sigma(Q_{2})=(x_{1}+f_{1,o}(t),x_{2}+f_{2,o}(t),...,x_{m}+f_{m,o}(t))R$.
So the problem reduces to the case where $f_{i}(t)$ are odd power
series. 

We rewrite the result of Proposition \ref{Q1 meet Q2} under this
reduction. We have that $g_{i}(t)=\frac{(f_{i}(t)-f_{i}(-t))}{(f_{1}(t)-f_{1}(-t))}=\frac{f_{i}(t)}{f_{1}(t)}$.
Also, $-b_{i}+b_{1}g_{i}(t)=-(x_{i}+f_{i}(t))+(x_{1}+f_{1}(t))g_{i}(t)=-x_{i}-f_{i}(t)+x_{1}g_{i}(t)+f_{i}(t)=-x_{i}+x_{1}g_{i}(t)$.
Thus, $P=(Q_{1}\cap Q_{2})\cap S=(\{(-b_{i}+b_{1}g_{i}(t)):i=2,...,m\})S+(\{\text{tr}((x_{i}-f_{i}(t))(x_{j}-f_{j}(-t))):i,j\in\{1,...,m\}\})S=(\{(-x_{i}+x_{1}g_{i}(t)):i=2,...,m\})S+(\{(x_{i}x_{j}-f_{i}(t)f_{j}(t)):i,j\in\{1,...,m\}\})S$.

For the sake of notational sanity we first illustrate the method for
the case when $m=3$ and discuss the generalization after that. 

We have that $Q_{1}=(x_{1}-f_{1}(t),x_{2}-f_{2}(t),x_{3}-f_{3}(t))R$
and $Q_{2}=(x_{1}+f_{1}(t),x_{2}+f_{2}(t),x_{3}+f_{3}(t))R$. Denote
$f_{i}(t)=f_{i}$ , $a_{i}=x_{i}-f_{i}$ and $b_{i}=x_{i}+f_{i}$
for $i=1,2,3$. Then $Q_{1}^{2}=(a_{1}^{2},a_{2}^{2},a_{3}^{2},a_{2}a_{3},a_{1}a_{3},a_{1}a_{2})R$
and $Q_{2}^{2}=(b_{1}^{2},b_{2}^{2},b_{3}^{2},b_{2}b_{3},b_{1}b_{3},b_{1}b_{2})R$.
If $u\in Q_{1}^{2}\cap Q_{2}^{2}$, then, we can write that $u=r_{1}a_{1}^{2}+r_{2}a_{2}^{2}+r_{3}a_{3}^{2}+s_{1}a_{2}a_{3}+s_{2}a_{1}a_{3}+s_{3}a_{1}a_{2}$
for some $r_{h},s_{h}\in R$, $h\in\{1,2,3\}$. Similarly, $u=r_{1}^{'}b_{1}^{2}+r_{2}^{'}b_{2}^{2}+r_{3}^{'}b_{3}^{2}+s_{1}^{'}b_{2}b_{3}+s_{2}^{'}b_{1}b_{3}+s_{3}^{'}b_{1}b_{2}$
for some $r_{h}^{'},s_{h}^{'}\in R$, $h\in\{1,2,3\}$. Now $b_{h}^{2}-a_{h}^{2}=4x_{i}f_{i}$
and $b_{h}b_{h^{'}}-a_{h}a_{h^{'}}=2x_{h}f_{h}+2x_{h^{'}}f_{h^{'}}$
with $h,h^{'}\in\{1,2,3\}$ and $h\neq h^{'}$. Equating the two expressions
for $u$, we have that $(r_{1}-r_{1}^{'})a_{1}^{2}+(r_{2}-r_{2}^{'})a_{2}^{2}+(r_{3}-r_{3}^{'})a_{3}^{2}+(s_{1}-s_{1}^{'})a_{2}a_{3}+(s_{2}-s_{2}^{'})a_{1}a_{3}+(s_{3}-s_{3}^{'})a_{1}a_{2}=r_{1}^{'}(4x_{1}f_{1})+r_{2}^{'}(4x_{2}f_{2})+r_{3}^{'}(4x_{3}f_{3})+s_{1}^{'}(2x_{2}f_{3}+2x_{3}f_{2})+s_{2}^{'}(2x_{1}f_{3}+2x_{3}f_{1})+s_{3}^{'}(2x_{1}f_{2}+2x_{2}f_{1})$.
The left hand side of this equation lies in $Q_{1}^{2}$, hence so
does the right hand side. The set of coefficients $\{r_{1}^{'},r_{2}^{'},r_{3}^{'},s_{1}^{'},s_{2}^{'},s_{3}^{'}\}$,
which determine elements of $Q_{1}^{2}\cap Q_{2}^{2}$ are completely
determined by the relations on $\{4x_{1}f_{1},4x_{2}f_{2},4x_{3}f_{3},2x_{2}f_{3}+2x_{3}f_{2},2x_{1}f_{3}+2x_{3}f_{1},2x_{1}f_{2}+2x_{2}f_{1}\}$
over $R/Q_{1}^{2}$ along with any $6$-tuple of elements in $Q_{1}^{2}$.
We now proceed to find the relations on $\{4x_{1}f_{1},4x_{2}f_{2},4x_{3}f_{3},2x_{2}f_{3}+2x_{3}f_{2},2x_{1}f_{3}+2x_{3}f_{1},2x_{1}f_{2}+2x_{2}f_{1}\}$
over $R/Q_{1}^{2}$. 

In $R/Q_{1}^{2}$ we have the following equations 
\begin{equation}
x_{1}^{2}=2x_{1}f_{1}-f_{1}^{2},x_{2}^{2}=2x_{2}f_{2}-f_{2}^{2},x_{3}^{2}=2x_{3}f_{3}-f_{3}^{2}\label{eq:relations q1 squared 1}
\end{equation}
\begin{equation}
x_{1}x_{2}=x_{1}f_{2}+x_{2}f_{1}-f_{1}f_{2},x_{2}x_{3}=x_{2}f_{3}+x_{3}f_{2}-f_{2}f_{3},x_{1}x_{3}=x_{1}f_{3}+x_{3}f_{1}-f_{1}f_{3}\label{eq:relations q1 squared 2}
\end{equation}

Thus, any element of $R/Q_{1}^{2}$ can be represented as $F_{0}(t)+F_{1}(t)x_{1}+F_{2}(t)x_{2}+F_{3}(t)x_{3}$,
where $F_{i}(t)\in k[[t]]$ for $i=0,1,2,3$. Thus, there is a one-one
correspondence between the relations on $\{4x_{1}f_{1},4x_{2}f_{2},4x_{3}f_{3},2x_{2}f_{3}+2x_{3}f_{2},2x_{1}f_{3}+2x_{3}f_{1},2x_{1}f_{2}+2x_{2}f_{1}\}$
over $R/Q_{1}^{2}$ and the relations between 
\[
E=\{4x_{1}f_{1},(4x_{1}f_{1})x_{1},(4x_{1}f_{1})x_{2},(4x_{1}f_{1})x_{3},
\]
\[
4x_{2}f_{2},(4x_{2}f_{2})x_{1},(4x_{2}f_{2})x_{2},(4x_{2}f_{2})x_{3},
\]
\[
4x_{3}f_{3},(4x_{3}f_{3})x_{1},(4x_{3}f_{3})x_{2},(4x_{3}f_{3})x_{3},
\]
\[
2x_{2}f_{3}+2x_{3}f_{2},(2x_{2}f_{3}+2x_{3}f_{2})x_{1},(2x_{2}f_{3}+2x_{3}f_{2})x_{2},(2x_{2}f_{3}+2x_{3}f_{2})x_{3},
\]
\[
2x_{1}f_{3}+2x_{3}f_{1},(2x_{1}f_{3}+2x_{3}f_{1})x_{1},(2x_{1}f_{3}+2x_{3}f_{1})x_{2},(2x_{1}f_{3}+2x_{3}f_{1})x_{3},
\]
\[
2x_{1}f_{2}+2x_{2}f_{1},(2x_{1}f_{2}+2x_{2}f_{1})x_{1},(2x_{1}f_{2}+2x_{2}f_{1})x_{2},(2x_{1}f_{2}+2x_{2}f_{1})x_{3}\}
\]
over $k[[t]]$. We rewrite these elements as $k[[t]]\text{-}$linear
combinations of $x_{1},x_{2},x_{3}$ and represent the coefficients
of $x_{1},x_{2},x_{3}$ and the term independent of these in a matrix
as follows. We abuse notation and denote the equivalence classes of
elements in $R$ modulo $Q_{1}^{2}$ by the same symbols as the elements
themselves.
\begin{enumerate}
\item $4x_{1}f_{1}$

\begin{enumerate}
\item $4x_{1}f_{1}=0+4f_{1}x_{1}+0x_{2}+0x_{3}$.
\item $(4x_{1}f_{1})x_{1}=4f_{1}x_{1}^{2}=4f_{1}(2x_{1}f_{1}-f_{1}^{2})=-4f_{1}^{3}+8f_{1}^{2}x_{1}+0x_{2}+0x_{3}$.
\item $(4x_{1}f_{1})x_{2}=4f_{1}x_{1}x_{2}=4f_{1}(x_{1}f_{2}+x_{2}f_{1}-f_{1}f_{2})=-4f_{1}^{2}f_{2}+4f_{1}f_{2}x_{1}+4f_{1}^{2}x_{2}+0x_{3}$.
\item $(4x_{1}f_{1})x_{3}=4f_{1}x_{1}x_{3}=4f_{1}(x_{1}f_{3}+x_{3}f_{1}-f_{1}f_{3})=-4f_{1}^{2}f_{3}+4f_{1}f_{3}x_{1}+0x_{2}+4f_{1}^{2}x_{3}$.
\end{enumerate}

We represent this data in the following matrix:

\begin{tabular}{l|cccc|}
\multicolumn{1}{l}{\multirow{1}{*}{coefficients in}} & $4x_{1}f_{1}$ & $(4x_{1}f_{1})x_{1}$ & $(4x_{1}f_{1})x_{2}$ & \multicolumn{1}{c}{$(4x_{1}f_{1})x_{3}$}\tabularnewline
term independent of $x_{1},x_{2},x_{3}$ & $0$ & $-4f_{1}^{3}$ & $-4f_{1}^{2}f_{2}$ & $-4f_{1}^{2}f_{3}$\tabularnewline
coefficient of $x_{1}$ & $4f_{1}$ & $8f_{1}^{2}$ & $4f_{1}f_{2}$ & $4f_{1}f_{3}$\tabularnewline
coefficient of $x_{2}$ & $0$ & $0$ & $4f_{1}^{2}$ & $0$\tabularnewline
coefficient of $x_{3}$ & $0$ & $0$ & $0$ & $ $$4f_{1}^{2}$\tabularnewline
\end{tabular}\medskip{}

We capture the above data in matrix $M_{1}$ below, 
\[
M_{1}=\left[\begin{array}{cccc}
0 & -4f_{1}^{3} & -4f_{1}^{2}f_{2} & -4f_{1}^{2}f_{3}\\
4f_{1} & 8f_{1}^{2} & 4f_{1}f_{2} & 4f_{1}f_{3}\\
0 & 0 & 4f_{1}^{2} & 0\\
0 & 0 & 0 & 4f_{1}^{2}
\end{array}\right].
\]

\item $4x_{2}f_{2}$: We represent elements $4x_{2}f_{2},(4x_{2}f_{2})x_{1},(4x_{2}f_{2})x_{2},(4x_{2}f_{2})x_{3}$
as $k[[t]]$-linear combinations of $x_{1},x_{2},x_{3}$ and collect
the coefficients in a matrix form in an analogous fashion. The associated
matrix is 
\[
M_{2}=\left[\begin{array}{cccc}
0 & -4f_{1}f_{2}^{2} & -4f_{2}^{3} & -4f_{2}^{2}f_{3}\\
0 & 4f_{2}^{2} & 0 & 0\\
4f_{2} & 4f_{1}f_{2} & 8f_{2}^{2} & 4f_{2}f_{3}\\
0 & 0 & 0 & 4f_{2}^{2}f_{3}
\end{array}\right].
\]

\item $4x_{3}f_{3}$: We repeat the above process for $4x_{3}f_{3},(4x_{3}f_{3})x_{1},(4x_{3}f_{3})x_{2},(4x_{3}f_{3})x_{3}$
and the associated matrix is 
\[
M_{3}=\left[\begin{array}{cccc}
0 & -4f_{1}f_{3}^{2} & -4f_{2}f_{3}^{2} & -4f_{3}^{3}\\
0 & 4f_{3}^{2} & 0 & 0\\
0 & 0 & 4f_{3}^{2} & 0\\
4f_{3} & 4f_{1}f_{3} & 4f_{2}f_{3} & 8f_{3}^{2}
\end{array}\right].
\]

\item $2x_{2}f_{3}+2x_{3}f_{2}$:

\begin{enumerate}
\item $2x_{2}f_{3}+2x_{3}f_{2}=0+0x_{1}+2f_{3}x_{2}+2f_{2}x_{3}$.
\item $(2x_{2}f_{3}+2x_{3}f_{2})x_{1}=2f_{3}x_{1}x_{2}+2f_{2}x_{1}x_{3}=2f_{3}(x_{1}f_{2}+x_{2}f_{1}-f_{1}f_{2})+2f_{2}(x_{1}f_{3}+x_{3}f_{1}-f_{1}f_{3})=-4f_{1}f_{2}f_{3}+4f_{2}f_{3}x_{1}+2f_{1}f_{3}x_{2}+2f_{1}f_{2}x_{3}$.
\item $(2x_{2}f_{3}+2x_{3}f_{2})x_{2}=2f_{3}x_{2}^{2}+2f_{2}x_{2}x_{3}=2f_{3}(2x_{2}f_{2}-f_{2}^{2})+2f_{2}(x_{2}f_{3}+x_{3}f_{2}-f_{2}f_{3})=-4f_{2}^{2}f_{3}+0x_{1}+6f_{2}f_{3}x_{2}+2f_{2}^{2}x_{3}$.
\item $(2x_{2}f_{3}+2x_{3}f_{2})x_{3}=2f_{3}x_{2}x_{3}+2f_{2}x_{3}^{2}=2f_{3}(x_{2}f_{3}+x_{3}f_{2}-f_{2}f_{3})+2f_{2}(2x_{3}f_{3}-f_{3}^{2})=-4f_{2}f_{3}^{2}+0x_{1}+2f_{3}^{2}x_{2}+6f_{2}f_{3}x_{3}$.
\end{enumerate}

So the associated matrix is 
\[
M_{23}=\left[\begin{array}{cccc}
0 & -4f_{1}f_{2}f_{3} & -4f_{2}^{2}f_{3} & -4f_{2}f_{3}^{2}\\
0 & 4f_{2}f_{3} & 0 & 0\\
2f_{3} & 2f_{1}f_{3} & 6f_{2}f_{3} & 2f_{3}^{2}\\
2f_{2} & 2f_{1}f_{2} & 2f_{2}^{2} & 6f_{2}f_{3}
\end{array}\right].
\]

\item Working as in the preceding case, the associated matrix for $2x_{1}f_{3}+2x_{3}f_{1},(2x_{1}f_{3}+2x_{3}f_{1})x_{1},(2x_{1}f_{3}+2x_{3}f_{1})x_{2},(2x_{1}f_{3}+2x_{3}f_{1})x_{3}$
is 
\[
M_{13}=\left[\begin{array}{cccc}
0 & -4f_{1}^{2}f_{3} & -4f_{1}f_{2}f_{3} & -4f_{1}f_{3}^{2}\\
2f_{3} & 6f_{1}f_{3} & 2f_{2}f_{3} & 2f_{3}^{2}\\
0 & 0 & 4f_{1}f_{3} & 0\\
2f_{1} & 2f_{1}^{2} & 2f_{1}f_{2} & 6f_{1}f_{3}
\end{array}\right].
\]

\item Finally, the associated matrix for $2x_{1}f_{2}+2x_{2}f_{1},(2x_{1}f_{2}+2x_{2}f_{1})x_{1},(2x_{1}f_{2}+2x_{2}f_{1})x_{2},(2x_{1}f_{2}+2x_{2}f_{1})x_{3}$
is 
\[
M_{12}=\left[\begin{array}{cccc}
0 & -4f_{1}^{2}f_{2} & -4f_{1}f_{2}^{2} & -4f_{1}f_{2}f_{3}\\
2f_{2} & 6f_{1}f_{2} & 2f_{2}^{2} & 2f_{2}f_{3}\\
2f_{1} & 2f_{1}^{2} & 6f_{1}f_{2} & 2f_{1}f_{3}\\
0 & 0 & 0 & 4f_{1}f_{2}
\end{array}\right].
\]

\end{enumerate}
We now consider the matrix $M=[M_{1}|M_{2}|M_{3}|M_{23}|M_{13}|M_{23}]$. 

\begin{center}
\arraycolsep=1pt

{\footnotesize 
\[
M=\left[\begin{array}{cccccccccccc}
0 & -4f_{1}^{3} & -4f_{1}^{2}f_{2} & -4f_{1}^{2}f_{3} & 0 & -4f_{1}f_{2}^{2} & -4f_{2}^{3} & -4f_{2}^{2}f_{3} & 0 & -4f_{1}f_{3}^{2} & -4f_{2}f_{3}^{2} & -4f_{3}^{3}\\
4f_{1} & 8f_{1}^{2} & 4f_{1}f_{2} & 4f_{1}f_{3} & 0 & 4f_{2}^{2} & 0 & 0 & 0 & 4f_{3}^{2} & 0 & 0\\
0 & 0 & 4f_{1}^{2} & 0 & 4f_{2} & 4f_{1}f_{2} & 8f_{2}^{2} & 4f_{2}f_{3} & 0 & 0 & 4f_{3}^{2} & 0\\
0 & 0 & 0 & 4f_{1}^{2} & 0 & 0 & 0 & 4f_{2}^{2} & 4f_{3} & 4f_{1}f_{3} & 4f_{2}f_{3} & 8f_{3}^{2}
\end{array}\right.
\]
}{\footnotesize \par}

{\footnotesize 
\[
\left.\begin{array}{cccccccccccc}
0 & -4f_{1}f_{2}f_{3} & -4f_{2}^{2}f_{3} & -4f_{2}f_{3}^{2} & 0 & -4f_{1}^{2}f_{3} & -4f_{1}f_{2}f_{3} & -4f_{1}f_{3}^{2} & 0 & -4f_{1}^{2}f_{2} & -4f_{1}f_{2}^{2} & -4f_{1}f_{2}f_{3}\\
0 & 4f_{2}f_{3} & 0 & 0 & 2f_{3} & 6f_{1}f_{3} & 2f_{2}f_{3} & 2f_{3}^{2} & 2f_{2} & 6f_{1}f_{2} & 2f_{2}^{2} & 2f_{2}f_{3}\\
2f_{3} & 2f_{1}f_{3} & 6f_{2}f_{3} & 2f_{3}^{2} & 0 & 0 & 4f_{1}f_{3} & 0 & 2f_{1} & 2f_{1}^{2} & 6f_{1}f_{2} & 2f_{1}f_{3}\\
2f_{2} & 2f_{1}f_{2} & 2f_{2}^{2} & 6f_{2}f_{3} & 2f_{1} & 2f_{1}^{2} & 2f_{1}f_{2} & 6f_{1}f_{3} & 0 & 0 & 0 & 4f_{1}f_{2}
\end{array}\right].
\]
}{\footnotesize \par}

\end{center}

Each column of $M$ encodes the $k[[t]]\text{-}$coefficients of the
elements of $E$. We will compute the $k[[t]]\text{-}$relations on
the columns of matrix $M$ and these will correspond to the $k[[t]]$-relations
on the elements of $E$. From these we can recover the relations $\{4x_{1}f_{1},4x_{2}f_{2},4x_{3}f_{3},2x_{2}f_{3}+2x_{3}f_{2},2x_{1}f_{3}+2x_{3}f_{1},2x_{1}f_{2}+2x_{2}f_{1}\}$
over $R/Q_{1}^{2}$ .

Multiplying the first row of $M$ by $-1$, we get the matrix 

\begin{center}
\arraycolsep=3pt

{\footnotesize 
\[
\left[\begin{array}{cccccccccccc}
0 & 4f_{1}^{3} & 4f_{1}^{2}f_{2} & 4f_{1}^{2}f_{3} & 0 & 4f_{1}f_{2}^{2} & 4f_{2}^{3} & 4f_{2}^{2}f_{3} & 0 & 4f_{1}f_{3}^{2} & 4f_{2}f_{3}^{2} & 4f_{3}^{3}\\
4f_{1} & 8f_{1}^{2} & 4f_{1}f_{2} & 4f_{1}f_{3} & 0 & 4f_{2}^{2} & 0 & 0 & 0 & 4f_{3}^{2} & 0 & 0\\
0 & 0 & 4f_{1}^{2} & 0 & 4f_{2} & 4f_{1}f_{2} & 8f_{2}^{2} & 4f_{2}f_{3} & 0 & 0 & 4f_{3}^{2} & 0\\
0 & 0 & 0 & 4f_{1}^{2} & 0 & 0 & 0 & 4f_{2}^{2} & 4f_{3} & 4f_{1}f_{3} & 4f_{2}f_{3} & 8f_{3}^{2}
\end{array}\right.
\]
\[
\left.\begin{array}{cccccccccccc}
0 & 4f_{1}f_{2}f_{3} & 4f_{2}^{2}f_{3} & 4f_{2}f_{3}^{2} & 0 & 4f_{1}^{2}f_{3} & 4f_{1}f_{2}f_{3} & 4f_{1}f_{3}^{2} & 0 & 4f_{1}^{2}f_{2} & 4f_{1}f_{2}^{2} & 4f_{1}f_{2}f_{3}\\
0 & 4f_{2}f_{3} & 0 & 0 & 2f_{3} & 6f_{1}f_{3} & 2f_{2}f_{3} & 2f_{3}^{2} & 2f_{2} & 6f_{1}f_{2} & 2f_{2}^{2} & 2f_{2}f_{3}\\
2f_{3} & 2f_{1}f_{3} & 6f_{2}f_{3} & 2f_{3}^{2} & 0 & 0 & 4f_{1}f_{3} & 0 & 2f_{1} & 2f_{1}^{2} & 6f_{1}f_{2} & 2f_{1}f_{3}\\
2f_{2} & 2f_{1}f_{2} & 2f_{2}^{2} & 6f_{2}f_{3} & 2f_{1} & 2f_{1}^{2} & 2f_{1}f_{2} & 6f_{1}f_{3} & 0 & 0 & 0 & 4f_{1}f_{2}
\end{array}\right].
\]
}\end{center}

Using notation introduced earlier, we write that $f_{2}=f_{1}g_{2}$
and $f_{3}=f_{1}g_{3}$. Denote the $i$th column of the matrix under
consideration by $C_{i}$. We perform the following column operations
on the preceding matrix:

$C_{2}-2f_{1}C_{1}$, $C_{3}-f_{2}C_{1}$, $C_{4}-f_{3}C_{1}$, $C_{6}-f_{1}g_{2}^{2}C_{1}$,
$C_{10}-f_{1}g_{3}^{2}C_{1}$, $C_{14}-f_{1}g_{2}g_{3}C_{1}$, $C_{17}-\frac{1}{2}g_{3}C_{1}$,
$C_{18}-\frac{3}{2}f_{3}C_{1}$, $C_{19}-\frac{1}{2}f_{1}g_{2}g_{3}C_{1}$,
$C_{20}-\frac{1}{2}f_{1}g_{3}^{2}C_{1}$, $C_{21}-\frac{1}{2}g_{2}C_{1}$,
$C_{22}-\frac{3}{2}f_{2}C_{1}$, $C_{23}-\frac{1}{2}f_{1}g_{2}^{2}C_{1}$,
$C_{24}-\frac{1}{2}f_{1}g_{2}g_{3}C_{1}$. We get the matrix

\begin{center}
\arraycolsep=3pt

{\footnotesize $\left[\begin{array}{cccccccccccc}
0 & 4f_{1}^{3} & 4f_{1}^{2}f_{2} & 4f_{1}^{2}f_{3} & 0 & 4f_{1}f_{2}^{2} & 4f_{2}^{3} & 4f_{2}^{2}f_{3} & 0 & 4f_{1}f_{3}^{2} & 4f_{2}f_{3}^{2} & 4f_{3}^{3}\\
4f_{1} & 0 & 0 & 0 & 0 & 0 & 0 & 0 & 0 & 0 & 0 & 0\\
0 & 0 & 4f_{1}^{2} & 0 & 4f_{2} & 4f_{1}f_{2} & 8f_{2}^{2} & 4f_{2}f_{3} & 0 & 0 & 4f_{3}^{2} & 0\\
0 & 0 & 0 & 4f_{1}^{2} & 0 & 0 & 0 & 4f_{2}^{2} & 4f_{3} & 4f_{1}f_{3} & 4f_{2}f_{3} & 8f_{3}^{2}
\end{array}\right.$}{\footnotesize \par}

{\footnotesize 
\[
\left.\begin{array}{cccccccccccc}
0 & 4f_{1}f_{2}f_{3} & 4f_{2}^{2}f_{3} & 4f_{2}f_{3}^{2} & 0 & 4f_{1}^{2}f_{3} & 4f_{1}f_{2}f_{3} & 4f_{1}f_{3}^{2} & 0 & 4f_{1}^{2}f_{2} & 4f_{1}f_{2}^{2} & 4f_{1}f_{2}f_{3}\\
0 & 0 & 0 & 0 & 0 & 0 & 0 & 0 & 0 & 0 & 0 & 0\\
2f_{3} & 2f_{1}f_{3} & 6f_{2}f_{3} & 2f_{3}^{2} & 0 & 0 & 6f_{1}f_{3} & 0 & 2f_{1} & 2f_{1}^{2} & 6f_{1}f_{2} & 2f_{1}f_{3}\\
2f_{2} & 2f_{1}f_{2} & 2f_{2}^{2} & 6f_{2}f_{3} & 2f_{1} & 2f_{1}^{2} & 2f_{1}f_{2} & 6f_{1}f_{3} & 0 & 0 & 0 & 4f_{1}f_{2}
\end{array}\right].
\]
}{\footnotesize \par}

\end{center}

We further perform the following column operations on the preceding
matrix:

$C_{3}-g_{2}C_{2}$, $C_{4}-g_{3}C_{2}$, $C_{6}-g_{2}^{2}C_{2}$,
$C_{7}-g_{2}^{3}C_{2}$, $C_{8}-g_{2}^{2}g_{3}C_{2}$, $C_{10}-g_{3}^{2}C_{2}$,
$C_{11}-g_{2}g_{3}^{2}C_{2}$, $C_{12}-g_{3}^{3}C_{2}$, $C_{14}-g_{2}g_{3}C_{2}$,
$C_{15}-g_{2}^{2}g_{3}C_{2}$, $C_{16}-g_{2}g_{3}^{2}C_{2}$, $C_{18}-g_{3}C_{2}$,
$C_{19}-g_{2}g_{3}C_{2}$, $C_{20}-g_{3}^{2}C_{2}$, $C_{22}-g_{2}C_{2}$,
$C_{23}-g_{2}^{2}C_{2}$, $C_{24}-g_{2}g_{3}C_{2}$. We get the matrix

\begin{center}
\arraycolsep=3pt

{\footnotesize $\left[\begin{array}{cccccccccccc}
0 & 4f_{1}^{3} & 0 & 0 & 0 & 0 & 0 & 0 & 0 & 0 & 0 & 0\\
4f_{1} & 0 & 0 & 0 & 0 & 0 & 0 & 0 & 0 & 0 & 0 & 0\\
0 & 0 & 4f_{1}^{2} & 0 & 4f_{2} & 4f_{1}f_{2} & 8f_{2}^{2} & 4f_{2}f_{3} & 0 & 0 & 4f_{3}^{2} & 0\\
0 & 0 & 0 & 4f_{1}^{2} & 0 & 0 & 0 & 4f_{2}^{2} & 4f_{3} & 4f_{1}f_{3} & 4f_{2}f_{3} & 8f_{3}^{2}
\end{array}\right.$
\[
\left.\begin{array}{cccccccccccc}
0 & 0 & 0 & 0 & 0 & 0 & 0 & 0 & 0 & 0 & 0 & 0\\
0 & 0 & 0 & 0 & 0 & 0 & 0 & 0 & 0 & 0 & 0 & 0\\
2f_{3} & 2f_{1}f_{3} & 6f_{2}f_{3} & 2f_{3}^{2} & 0 & 0 & 4f_{1}f_{3} & 0 & 2f_{1} & 2f_{1}^{2} & 6f_{1}f_{2} & 2f_{1}f_{3}\\
2f_{2} & 2f_{1}f_{2} & 2f_{2}^{2} & 6f_{2}f_{3} & 2f_{1} & 2f_{1}^{2} & 2f_{1}f_{2} & 6f_{1}f_{3} & 0 & 0 & 0 & 4f_{1}f_{2}
\end{array}\right].
\]
}{\footnotesize \par}

\end{center}

The following column operations are performed on the preceding matrix:

$C_{3}-2f_{1}C_{21}$, $C_{5}-2g_{2}C_{21}$, $C_{6}-2f_{2}C_{21}$,
$C_{7}-4f_{1}g_{2}C_{21}$, $C_{8}-2f_{1}g_{2}g_{3}C_{21}$, $C_{11}-2f_{1}g_{3}^{2}C_{21}$,
$C_{13}-g_{3}C_{21}$, $C_{14}-f_{3}C_{21}$, $C_{15}-3f_{1}g_{2}g_{3}C_{21}$,
$C_{16}-f_{1}g_{3}^{2}C_{21}$, $C_{19}-2f_{3}C_{21}$, $C_{22}-f_{1}C_{21}$,
$C_{23}-3f_{2}C_{21}$, $C_{24}-f_{3}C_{21}$. We get the matrix

\begin{center}
\arraycolsep=3pt

{\footnotesize $\left[\begin{array}{cccccccccccc}
0 & 4f_{1}^{3} & 0 & 0 & 0 & 0 & 0 & 0 & 0 & 0 & 0 & 0\\
4f_{1} & 0 & 0 & 0 & 0 & 0 & 0 & 0 & 0 & 0 & 0 & 0\\
0 & 0 & 0 & 0 & 0 & 0 & 0 & 0 & 0 & 0 & 0 & 0\\
0 & 0 & 0 & 4f_{1}^{2} & 0 & 0 & 0 & 4f_{2}^{2} & 4f_{3} & 4f_{1}f_{3} & 4f_{2}f_{3} & 8f_{3}^{2}
\end{array}\right.$
\[
\left.\begin{array}{cccccccccccc}
0 & 0 & 0 & 0 & 0 & 0 & 0 & 0 & 0 & 0 & 0 & 0\\
0 & 0 & 0 & 0 & 0 & 0 & 0 & 0 & 0 & 0 & 0 & 0\\
0 & 0 & 0 & 0 & 0 & 0 & 0 & 0 & 2f_{1} & 0 & 0 & 0\\
2f_{2} & 2f_{1}f_{2} & 2f_{2}^{2} & 6f_{2}f_{3} & 2f_{1} & 2f_{1}^{2} & 2f_{1}f_{2} & 6f_{1}f_{3} & 0 & 0 & 0 & 4f_{1}f_{2}
\end{array}\right].
\]
}{\footnotesize \par}

\end{center}

Finally, the following column operations are performed on the preceding
matrix: 

$C_{4}-2f_{1}C_{17}$, $C_{8}-2f_{1}g_{2}^{2}C_{17}$, $C_{9}-2g_{3}C_{17}$,
$C_{10}-2f_{3}C_{17}$, $C_{11}-2f_{1}g_{2}g_{3}C_{17}$, $C_{12}-4f_{1}g_{3}^{2}C_{17}$,
$C_{13}-g_{2}C_{17}$, $C_{14}-f_{2}C_{17}$, $C_{15}-f_{1}g_{2}^{2}C_{17}$,
$C_{16}-3f_{1}g_{2}g_{3}C_{17}$, $C_{18}-f_{1}C_{17}$, $C_{19}-f_{2}C_{17}$,
$C_{20}-3f_{3}C_{17}$, $C_{24}-2f_{2}C_{17}$. We get the matrix

\begin{center}
\arraycolsep=3pt

{\footnotesize 
\[
\left[\begin{array}{cccccccccccccccccccccccc}
0 & 4f_{1}^{3} & 0 & 0 & 0 & 0 & 0 & 0 & 0 & 0 & 0 & 0 & 0 & 0 & 0 & 0 & 0 & 0 & 0 & 0 & 0 & 0 & 0 & 0\\
4f_{1} & 0 & 0 & 0 & 0 & 0 & 0 & 0 & 0 & 0 & 0 & 0 & 0 & 0 & 0 & 0 & 0 & 0 & 0 & 0 & 0 & 0 & 0 & 0\\
0 & 0 & 0 & 0 & 0 & 0 & 0 & 0 & 0 & 0 & 0 & 0 & 0 & 0 & 0 & 0 & 0 & 0 & 0 & 0 & 2f_{1} & 0 & 0 & 0\\
0 & 0 & 0 & 0 & 0 & 0 & 0 & 0 & 0 & 0 & 0 & 0 & 0 & 0 & 0 & 0 & 2f_{1} & 0 & 0 & 0 & 0 & 0 & 0 & 0
\end{array}\right].
\]
}{\footnotesize \par}

\end{center}

The columns with all entries zero correspond to the relations among
the columns. A sage script to verfiy these relations is included in
the ancillary files to this paper (verify\_relations.sage). We list
the relations here:
\begin{enumerate}
\item $C_{3}+2f_{2}C_{1}-g_{2}C_{2}-2f_{1}C_{21}=0$.
\item $C_{4}+2f_{3}C_{1}-2f_{1}C_{17}-g_{3}C_{2}=0$.
\item $C_{5}-2g_{2}C_{21}+g_{2}^{2}C_{1}=0$.
\item $C_{6}+2f_{2}g_{2}C_{1}-g_{2}^{2}C_{2}-2f_{2}C_{21}=0$.
\item $C_{7}-g_{2}^{3}C_{2}+4f_{1}g_{2}^{3}C_{1}-4f_{1}g_{2}^{2}C_{21}=0$.
\item $C_{8}-g_{2}^{2}g_{3}C_{2}+4f_{1}g_{2}^{2}g_{3}C_{1}-2f_{1}g_{2}g_{3}C_{21}-2f_{1}g_{2}^{2}C_{17}=0$.
\item $C_{9}-2g_{3}C_{17}+g_{3}^{2}C_{1}=0$.
\item $C_{10}-g_{3}^{2}C_{2}+2f_{1}g_{3}^{2}C_{1}-2f_{3}C_{17}=0$.
\item $C_{11}-g_{2}g_{3}^{2}C_{2}+4f_{1}g_{2}g_{3}^{2}C_{1}-2f_{1}g_{3}^{2}C_{21}-2f_{1}g_{2}g_{3}C_{17}=0$.
\item $C_{12}-g_{3}^{3}C_{2}-4f_{1}g_{3}^{2}C_{17}+4f_{1}g_{3}^{3}C_{1}=0$.
\item $C_{13}-g_{3}C_{21}+g_{2}g_{3}C_{1}-g_{2}C_{17}=0$.
\item $C_{14}-g_{2}g_{3}C_{2}+2f_{1}g_{2}g_{3}C_{1}-f_{3}C_{21}-f_{2}C_{17}=0$.
\item $C_{15}-g_{2}^{2}g_{3}C_{2}-3f_{1}g_{2}g_{3}C_{21}+4f_{1}g_{2}^{2}g_{3}C_{1}-f_{1}g_{2}^{2}C_{17}=0$.
\item $C_{16}-g_{2}g_{3}^{2}C_{2}+4f_{1}g_{2}g_{3}^{2}C_{1}-f_{1}g_{3}^{2}C_{21}-3f_{1}g_{2}g_{3}C_{17}=0$.
\item $C_{18}-g_{3}C_{2}-f_{1}C_{17}+f_{3}C_{1}=0$.
\item $C_{19}+3f_{1}g_{2}g_{3}C_{1}-g_{2}g_{3}C_{2}-2f_{3}C_{21}-f_{2}C_{17}=0$.
\item $C_{20}+3f_{1}g_{3}^{2}C_{1}-g_{3}^{2}C_{2}-3f_{3}C_{17}=0$.
\item $C_{22}-g_{2}C_{2}-f_{1}C_{21}+f_{1}g_{2}C_{1}=0$.
\item $C_{23}+3f_{1}g_{2}^{2}C_{1}-g_{2}^{2}C_{2}-3f_{2}C_{21}=0$.
\item $C_{24}-g_{2}g_{3}C_{2}+3f_{1}g_{2}g_{3}C_{1}-f_{3}C_{21}-2f_{2}C_{17}=0$.
\end{enumerate}
Now consider the first relation $C_{3}+2f_{2}C_{1}-g_{2}C_{2}-2f_{1}C_{21}=0.$
The first column corresponds to coefficients of $4x_{1}f_{1}$, the
second corresponds to coefficients of $(4x_{1}f_{1})x_{1}$, the third
corresponds to coefficients of $(4x_{1}f_{1})x_{2}$, and finally,
the twenty-first column corresponds to coefficients of $2x_{1}f_{2}+2x_{2}f_{1}$.
Now this relation over the columns corresponds to a relation on 
\[
\mathcal{E}=\{4x_{1}f_{1},4x_{2}f_{2},4x_{3}f_{3},2x_{2}f_{3}+2x_{3}f_{2},2x_{1}f_{3}+2x_{3}f_{1},2x_{1}f_{2}+2x_{2}f_{1}\}
\]
 over $R/Q_{1}^{2}$. If $ $$(\alpha_{1},\alpha_{2},\alpha_{3},\alpha_{4},\alpha_{5},\alpha_{6})$
is a relation, then, 
\begin{align*}
\alpha_{i} & =(\text{coefficient of column }(4(i-1)+1))\\
 & +(\text{coefficient of column }(4(i-1)+2))x_{1}\\
 & +(\text{coefficient of column }(4(i-1)+3))x_{2}\\
 & +\text{(coefficient of column }4i)x_{3}
\end{align*}
for $i\in\{1,2,...,6\}$. The relation over $R/Q_{1}^{2}$ corresponding
to the first column relation is then $(2f_{2}-g_{2}x_{1}+x_{2},0,0,0,0,-2f_{1})$.
Proceeding similarly, we get that the relations corresponding to all
column relations. The set of columns of $M$ generate a submodule
of $k[[t]]^{24}$ considered as a $k[[t]]$-module. Since $k[[t]]$
is a principal ideal domain, every submodule is in fact free. Therefore,
the notion of rank is well defined. From the final step after performing
the above column operations on $M$ we can see that $M$ has rank
$4$ and hence the twenty relations above generate the module of relations
on $\{4x_{1}f_{1},4x_{2}f_{2},4x_{3}f_{3},2x_{2}f_{3}+2x_{3}f_{2},2x_{1}f_{3}+2x_{3}f_{1},2x_{1}f_{2}+2x_{2}f_{1}\}$.
We indicate the corresponding relations over $R/Q_{1}^{2}$ in the
following table.

\begin{center}

{\tiny }%
\begin{tabular}{|c|c|}
\hline 
{\tiny Relation over the columns} & {\tiny Relation on $\mathcal{E}$ over $R/Q_{1}^{2}$}\tabularnewline
\hline 
{\tiny $C_{3}+2f_{2}C_{1}-g_{2}C_{2}-2f_{1}C_{21}=0$ } & {\tiny $(2f_{2}-g_{2}x_{1}+x_{2},0,0,0,0,-2f_{1})$}\tabularnewline
\hline 
{\tiny $C_{4}+2f_{3}C_{1}-2f_{1}C_{17}-g_{3}C_{2}=0$} & {\tiny $(2f_{3}-g_{3}x_{1}+x_{3},0,0,0,-2f_{1},0)$}\tabularnewline
\hline 
{\tiny $C_{5}-2g_{2}C_{21}+g_{2}^{2}C_{1}=0$} & {\tiny $(g_{2}^{2},1,0,0,0,-2g_{2})$}\tabularnewline
\hline 
{\tiny $C_{6}+2f_{2}g_{2}C_{1}-g_{2}^{2}C_{2}-2f_{2}C_{21}=0$} & {\tiny $(2f_{2}g_{2}-g_{2}^{2}x_{1},x_{1},0,0,0,-2f_{2})$}\tabularnewline
\hline 
{\tiny $C_{7}-g_{2}^{3}C_{2}+4f_{1}g_{2}^{3}C_{1}-4f_{1}g_{2}^{2}C_{21}=0$} & {\tiny $(4f_{1}g_{2}^{3}-g_{2}^{3}x_{1},x_{2},0,0,0,-4f_{1}g_{2}^{2})$}\tabularnewline
\hline 
{\tiny $C_{8}-g_{2}^{2}g_{3}C_{2}+4f_{1}g_{2}^{2}g_{3}C_{1}-2f_{1}g_{2}g_{3}C_{21}-2f_{1}g_{2}^{2}C_{17}=0$} & {\tiny $(4f_{1}g_{2}^{2}g_{3}-g_{2}^{2}g_{3}x_{1},x_{3},0,0,-2f_{1}g_{2}^{2},-2f_{1}g_{2}g_{3})$}\tabularnewline
\hline 
{\tiny $C_{9}-2g_{3}C_{17}+g_{3}^{2}C_{1}=0$} & {\tiny $(g_{3}^{2},0,1,0,-2g_{3},0)$}\tabularnewline
\hline 
{\tiny $C_{10}-g_{3}^{2}C_{2}+2f_{1}g_{3}^{2}C_{1}-2f_{3}C_{17}=0$} & {\tiny $(2f_{1}g_{3}^{2}-g_{3}^{3}x_{1},0,x_{1},0,-2f_{3},0)$}\tabularnewline
\hline 
{\tiny $C_{11}-g_{2}g_{3}^{2}C_{2}+4f_{1}g_{2}g_{3}^{2}C_{1}-2f_{1}g_{3}^{2}C_{21}-2f_{1}g_{2}g_{3}C_{17}=0$} & {\tiny $(4f_{1}g_{2}g_{3}^{2}-g_{2}g_{3}^{2}x_{1},0,x_{2},0-2f_{1}g_{2}g_{3},-2f_{1}g_{3}^{2})$}\tabularnewline
\hline 
{\tiny $C_{12}-g_{2}^{3}C_{2}-4f_{1}g_{3}^{2}C_{17}+4f_{1}g_{3}^{3}C_{1}=0$} & {\tiny $(4f_{1}g_{3}^{3}-g_{3}^{3}x_{1},0,x_{3},0,-4f_{1}g_{3}^{2},0)$}\tabularnewline
\hline 
{\tiny $C_{13}-g_{3}C_{21}+g_{2}g_{3}C_{1}-g_{2}C_{17}=0$} & {\tiny $(g_{2}g_{3},0,0,1,-g_{2},-g_{3})$}\tabularnewline
\hline 
{\tiny $C_{14}-g_{2}g_{3}C_{2}+2f_{1}g_{2}g_{3}C_{1}-f_{3}C_{21}-f_{2}C_{17}=0$} & {\tiny $(2f_{1}g_{2}g_{3}-g_{2}g_{3}x_{1},0,0,x_{1},-f_{2},-f_{3})$}\tabularnewline
\hline 
{\tiny $C_{15}-g_{2}^{2}g_{3}C_{2}-3f_{1}g_{2}g_{3}C_{21}+4f_{1}g_{2}^{2}g_{3}C_{1}-f_{1}g_{2}^{2}C_{17}=0$} & {\tiny $(4f_{1}g_{2}^{2}g_{3}-g_{2}^{2}g_{3}x_{1},0,0,x_{2},-f_{1}g_{2}^{2},-3f_{1}g_{2}g_{3})$}\tabularnewline
\hline 
{\tiny $C_{16}-g_{2}g_{3}^{2}C_{2}+4f_{1}g_{2}g_{3}^{2}C_{1}-f_{1}g_{3}^{2}C_{21}-3f_{1}g_{2}g_{3}C_{17}=0$} & {\tiny $(4f_{1}g_{2}g_{3}^{2}-g_{2}g_{3}^{2}x_{1},0,0,x_{3},-3f_{1}g_{2}g_{3},-f_{1}g_{3}^{2})$}\tabularnewline
\hline 
{\tiny $C_{18}-g_{3}C_{2}-f_{1}C_{17}+f_{3}C_{1}=0$} & {\tiny $(f_{3}-g_{3}x_{1},0,0,0,-f_{1}+x_{1},0)$}\tabularnewline
\hline 
{\tiny $C_{19}+3f_{1}g_{2}g_{3}C_{1}-g_{2}g_{3}C_{2}-2f_{3}C_{21}-f_{2}C_{17}=0$} & {\tiny $(3f_{1}g_{2}g_{3}-g_{2}g_{3}x_{1},0,0,0,-f_{2}+x_{2},-2f_{3})$}\tabularnewline
\hline 
{\tiny $C_{20}+3f_{1}g_{3}^{2}C_{1}-g_{3}^{2}C_{2}-3f_{3}C_{17}=0$} & {\tiny $(3f_{1}g_{3}^{2}-g_{3}^{2}x_{1},0,0,0,-3f_{3}+x_{3},0)$}\tabularnewline
\hline 
{\tiny $C_{22}-g_{2}C_{2}-f_{1}C_{21}+f_{1}g_{2}C_{1}=0$} & {\tiny $(f_{1}g_{2}-g_{2}x_{1},0,0,0,0,x_{1}-f_{1})$}\tabularnewline
\hline 
{\tiny $C_{23}+3f_{1}g_{2}^{2}C_{1}-g_{2}^{2}C_{2}-3f_{2}C_{21}=0$} & {\tiny $(3f_{1}g_{2}^{2}-g_{2}^{2}x_{1},0,0,0,0,-3f_{2}+x_{2})$}\tabularnewline
\hline 
{\tiny $C_{24}-g_{2}g_{3}C_{2}+3f_{1}g_{2}g_{3}C_{1}-f_{3}C_{21}-2f_{2}C_{17}=0$} & {\tiny $(3f_{1}g_{2}g_{3}-g_{2}g_{3}x_{1},0,0,0,-2f_{2},-f_{3}+x_{3})$}\tabularnewline
\hline 
\end{tabular}{\tiny \par}

\end{center}

\bigskip{}

Now the relation $(\alpha_{1},\alpha_{2},\alpha_{3},\alpha_{4},\alpha_{5},\alpha_{6})$
corresponds to the generator of $Q_{1}^{2}\cap Q_{2}^{2}$ given by
$\alpha_{1}(x_{1}+f_{1})^{2}+\alpha_{2}(x_{2}+f_{2})^{2}+\alpha_{3}(x_{3}+f_{3})^{2}+\alpha_{4}(x_{2}+f_{2})(x_{3}+f_{3})+\alpha_{5}(x_{1}+f_{1})(x_{3}+f_{3})+\alpha_{6}(x_{1}+f_{1})(x_{2}+f_{2})$.
Corresponding to the above relations we obtain generators of $Q_{1}^{2}\cap Q_{2}^{2}$
as follows: 
\begin{enumerate}
\item $\gamma_{1}=(2f_{2}-g_{2}x_{1}+x_{2})(x_{1}+f_{1})^{2}-2f_{1}$$(x_{1}+f_{1})(x_{2}+f_{2})$.
\item $\gamma_{2}=(2f_{3}-g_{3}x_{1}+x_{3})(x_{1}+f_{1})^{2}-2f_{1}(x_{1}+f_{1})(x_{3}+f_{3})$.
\item $\gamma_{3}=g_{2}^{2}(x_{1}+f_{1})^{2}+(x_{2}+f_{2})^{2}-2g_{2}(x_{1}+f_{1})(x_{2}+f_{2})$.
\item $\gamma_{4}=(2f_{2}g_{2}-g_{2}^{2}x_{1})(x_{1}+f_{1})^{2}+x_{1}(x_{2}+f_{2})^{2}-2f_{2}$$(x_{1}+f_{1})(x_{2}+f_{2})$.
\item $\gamma_{5}=(4f_{1}g_{2}^{3}-g_{2}^{3}x_{1})(x_{1}+f_{1})^{2}+x_{2}(x_{2}+f_{2})^{2}-4f_{1}g_{2}^{2}$$(x_{1}+f_{1})(x_{2}+f_{2})$.
\item $\gamma_{6}=(4f_{1}g_{2}^{2}g_{3}-g_{2}^{2}g_{3}x_{1})(x_{1}+f_{1})^{2}+x_{3}(x_{2}+f_{2})^{2}-2f_{1}g_{2}^{2}(x_{1}+f_{1})(x_{3}+f_{3})-2f_{1}g_{2}g_{3}(x_{1}+f_{1})(x_{2}+f_{2})$.
\item $\gamma_{7}=g_{3}^{2}(x_{1}+f_{1})^{2}+(x_{3}+f_{3})^{2}-2g_{3}(x_{1}+f_{1})(x_{3}+f_{3})$.
\item $\gamma_{8}=(2f_{1}g_{3}^{2}-g_{3}^{2}x_{1})(x_{1}+f_{1})^{2}+x_{1}(x_{3}+f_{3})^{2}-2f_{3}(x_{1}+f_{1})(x_{3}+f_{3})$.
\item $\gamma_{9}=(4f_{1}g_{2}g_{3}^{2}-g_{2}g_{3}^{2}x_{1})(x_{1}+f_{1})^{2}+x_{2}(x_{3}+f_{3})^{2}-2f_{1}g_{2}g_{3}(x_{1}+f_{1})(x_{3}+f_{3})-2f_{1}g_{3}^{2}(x_{1}+f_{1})(x_{2}+f_{2})$.
\item $\gamma_{10}=(4f_{1}g_{3}^{3}-g_{3}^{3}x_{1})(x_{1}+f_{1})^{2}+x_{3}(x_{3}+f_{3})^{2}-4f_{1}g_{3}^{2}((x_{1}+f_{1})(x_{3}+f_{3})$.
\item $\gamma_{11}=g_{2}g_{3}(x_{1}+f_{1})^{2}+(x_{2}+f_{2})(x_{3}+f_{3})-g_{2}(x_{1}+f_{1})(x_{3}+f_{3})-g_{3}(x_{1}+f_{1})(x_{2}+f_{2})$.
\item $\gamma_{12}=(2f_{1}g_{2}g_{3}-g_{2}g_{3}x_{1})(x_{1}+f_{1})^{2}+x_{1}(x_{2}+f_{2})(x_{3}+f_{3})-f_{2}(x_{1}+f_{1})(x_{3}+f_{3})-f_{3}(x_{1}+f_{1})(x_{2}+f_{2})$.
\item $\gamma_{13}=(4f_{1}g_{2}^{2}g_{3}-g_{2}^{2}g_{3}x_{1})(x_{1}+f_{1})^{2}+x_{2}(x_{2}+f_{2})(x_{3}+f_{3})-f_{1}g_{2}^{2}(x_{1}+f_{1})(x_{3}+f_{3})-3f_{1}g_{2}g_{3}(x_{1}+f_{1})(x_{2}+f_{2})$.
\item $\gamma_{14}=(4f_{1}g_{2}g_{3}^{2}-g_{2}g_{3}^{2}x_{1})(x_{1}+f_{1})^{2}+x_{3}(x_{2}+f_{2})(x_{3}+f_{3})-3f_{1}g_{2}g_{3}(x_{1}+f_{1})(x_{3}+f_{3})-f_{1}g_{3}^{2}((x_{1}+f_{1})(x_{2}+f_{2})$.
\item $\gamma_{15}=(f_{3}-g_{3}x_{1})(x_{1}+f_{1})^{2}+(x_{1}-f_{1})(x_{1}+f_{1})(x_{3}+f_{3})$.
\item $\gamma_{16}=(3f_{1}g_{2}g_{3}-g_{2}g_{3}x_{1})(x_{1}+f_{1})^{2}+(x_{2}-f_{2})(x_{1}+f_{1})(x_{3}+f_{3})-2f_{3}(x_{1}+f_{1})(x_{2}+f_{2})$.
\item $\gamma_{17}=(3f_{1}g_{3}^{2}-g_{3}^{2}x_{1})(x_{1}+f_{1})^{2}+(x_{3}-3f_{3})(x_{1}+f_{1})(x_{3}+f_{3})$.
\item $\gamma_{18}=(f_{1}g_{2}-g_{2}x_{1})(x_{1}+f_{1})^{2}+(x_{1}-f_{1})(x_{1}+f_{1})(x_{2}+f_{2})$.
\item $\gamma_{19}=(3f_{1}g_{2}^{2}-g_{2}^{2}x_{1})(x_{1}+f_{1})^{2}+(x_{2}-3f_{2})(x_{1}+f_{1})(x_{2}+f_{2})$.
\item $\gamma_{20}=(3f_{1}g_{2}g_{3}-g_{2}g_{3}x_{1})(x_{1}+f_{1})^{2}-2f_{2}((x_{1}+f_{1})(x_{3}+f_{3})+(x_{3}-f_{3})(x_{1}+f_{1})(x_{2}+f_{2})$.
\end{enumerate}
Since these generators correspond to relations on $\mathcal{E=}\{4x_{1}f_{1},4x_{2}f_{2},4x_{3}f_{3},2x_{2}f_{3}+2x_{3}f_{2},2x_{1}f_{3}+2x_{3}f_{1},2x_{1}f_{2}+2x_{2}f_{1}\}$
over $R/Q_{1}^{2}$, we need to include the set of generators of $Q_{1}^{2}Q_{2}^{2}$
to get a complete set of generators for $Q_{1}^{2}\cap Q_{2}^{2}$.
Thus, $Q_{1}^{2}\cap Q_{2}^{2}=(\{\gamma_{i}:i=1,...,20\})R+Q_{1}^{2}Q_{2}^{2}$.

\subsection{$(Q_{1}^{2}\cap Q_{2}^{2})\cap S\subseteq\mathfrak{m}P$ (special
case)\label{sub:-(special-case)}}

We obtain the contraction of $Q_{1}^{2}\cap Q_{2}^{2}$ to $S$ by
applying the trace map to each of the generators of $Q_{1}^{2}\cap Q_{2}^{2}$.
Showing that each of these generators of $(Q_{1}^{2}\cap Q_{2}^{2})\cap S$
lies in $\mathfrak{m}P$ proves the Eisenbud-Mazur conjecture in this
case. Note that we showed earlier $P=(\{(-x_{i}+x_{1}g_{i}(t)):i=2,...,m\})S+(\{(x_{i}x_{j}-f_{i}(t)f_{j}(t)):i,j\in\{1,...,m\}\})S$.
In the case of three generators, we have that $P=(x_{1}g_{2}-x_{2},x_{1}g_{3}-x_{3},x_{1}x_{2}-f_{1}f_{2},x_{1}x_{3}-f_{1}f_{3},x_{2}x_{3}-f_{2}f_{3},x_{1}^{2}-f_{1}^{2},x_{2}^{2}-f_{2}^{2},x_{3}^{2}-f_{3}^{2})S$.
We can express each generator of $ $$(Q_{1}^{2}\cap Q_{2}^{2})\cap S$
as a linear combination of these generators of $P$ with coefficients
in $\mathfrak{m}$. These computations are a special case of the computations
for the general case illustrated in Section \ref{sub: general case computations}.

\subsection{Computing generators of $Q_{1}^{2}\cap Q_{2}^{2}$ (general case)}

We extend the methods of section \ref{special case q12 cap q22} to
compute the generators of $Q_{1}^{2}\cap Q_{2}^{2}$ for any number
$m$ of generators of $Q_{1}$. In this case, we seek relations on
\begin{align*}
\mathcal{E} & =(4x_{1}f_{1},...,4x_{m}f_{m},2x_{1}f_{2}+2x_{2}f_{1},...,2x_{1}f_{m}+2x_{m}f_{1},2x_{2}f_{3}+2x_{3}f_{2},...,\\
 & 2x_{2}f_{m}+2x_{m}f_{2},...,2x_{m-1}f_{m}+2x_{m}f_{m-1})
\end{align*}
 over $R/Q_{1}^{2}$. Denote a typical relation by the vector 
\[
(\alpha_{1},...,\alpha_{m},\alpha_{12},...,\alpha_{1m},\alpha_{23},...,\alpha_{2m},...,\alpha_{m-1,m}).
\]
 As before, we can express every element of $R/Q_{1}^{2}$ as a $k[[t]]\text{-}$linear
combination of $x_{1},...,x_{m}$. We associate the matrix $M$ of
coefficients in $k[[t]]$ with the set $\mathcal{E}$ as we did in
the case when $m=3$. Corresponding to each element in $\mathcal{E}$,
we will have $m+1$ columns of coefficients. Now $|\mathfrak{\mathcal{E}|}=m+\frac{m(m-1)}{2}=\frac{m(m+1)}{2}$.
So we have that $\frac{m(m+1)(m+1)}{2}$ columns in $M$. Performing
column operations as before, we can show that $M$ has rank equal
to the number of rows, which is $m+1$, giving rise to $\frac{m(m+1)^{2}}{2}-(m+1)=\frac{(m-1)(m+1)(m+2)}{2}$
relations. We indicate these relations below. 
\begin{enumerate}
\item The $m-1$ relations with $\alpha_{1}=g_{i}^{2},\alpha_{i}=1,\alpha_{1i}=-2g_{i}$,
$1<i\leq m$.
\item The $m-1$ relations with $\alpha_{1}=g_{i}(f_{1}-x_{1}),\alpha_{1i}=x_{1}-f_{1}$,
$1<i\leq m$.
\item The $(m-1)(m-2)$ relations with $\alpha_{1}=g_{i}g_{j}(3f_{1}-x_{1}),\alpha_{1i}=x_{j}-f_{j},\alpha_{1j}=-2f_{i}$,
$1<i\neq j<m$.
\item The $(m-1)(m-2)/2$ relations with $\alpha_{1}=g_{i}g_{j}(2f_{1}-x_{1}),\alpha_{1i}=-f_{j},\alpha_{1j}=-f_{i},\alpha_{ij}=x_{1}$,
$1<i\neq j<m$.
\item The $(m-1)(m-2)/2$ relations with $\alpha_{1}=g_{i}g_{j},\alpha_{1i}=-g_{j},\alpha_{1j}=-g_{i},\alpha_{ij}=1$,
$1<i\neq j<m$.
\item The $(m-1)(m-2)$ relations with $\alpha_{1}=g_{i}^{2}g_{j}(4f_{1}-x_{1}),\alpha_{1i}=-3f_{1}g_{i}g_{j},\alpha_{1j}=-f_{1}g_{i}^{2},\alpha_{ij}=x_{i}$,
$1<i\neq j<m$.
\item The $m-1$ relations with $\alpha_{1}=g_{i}^{2}(3f_{1}-x_{1}),\alpha_{1i}=x_{i}-3f_{i}$,
$1<i\leq m$.
\item The $m-1$ relations with $\alpha_{1}=g_{i}(2f_{1}-x_{1})+x_{i},\alpha_{1i}=-2f_{1}$,
$1<i\leq m$.
\item The $m-1$ relations with $\alpha_{1}=g_{i}^{3}(4f_{1}-x_{1}),\alpha_{i}=x_{i},\alpha_{1i}=-4f_{1}g_{i}^{2}$,
$1<i\leq m$.
\item The $m-1$ relations with $\alpha_{1}=g_{i}^{2}(2f_{1}-x_{1}),\alpha_{i}=x_{1},\alpha_{1i}=-2f_{i}$,
$1<i\leq m$.
\item The $(m-1)(m-2)$ relations with $\alpha_{1}=g_{i}^{2}g_{j}(4f_{1}-x_{1}),\alpha_{i}=x_{j},\alpha_{1i}=-2f_{1}g_{i}g_{j},\alpha_{1j}=-2f_{1}g_{i}^{2}$,
$1<i\neq j<m$.
\item The $(m-1)(m-2)(m-3)/2$ relations with
\end{enumerate}
$\begin{cases}
\alpha_{1}=g_{i}g_{j}g_{k}(2f_{1}-x_{1}),\alpha_{ij}=x_{k},\alpha_{ki}=-f_{1}g_{j},\alpha_{kj}=-f_{1}g_{i}, & 1<k<i<j\leq n\\
\alpha_{1}=g_{i}g_{j}g_{k}(2f_{1}-x_{1}),\alpha_{ij}=x_{k},\alpha_{ik}=-f_{1}g_{j},\alpha_{kj}=-f_{1}g_{i}, & 1<i<k<j\leq n\\
\alpha_{1}=g_{i}g_{j}g_{k}(2f_{1}-x_{1}),\alpha_{ij}=x_{k},\alpha_{ik}=-f_{1}g_{j},\alpha_{jk}=-f_{1}g_{i}, & 1<i<j<k\leq n
\end{cases}$.

So we have accounted for all the $6(m-1)+4(m-1)(m-2)+\frac{(m-1)(m-2)(m-3)}{2}=\frac{(m-1)(m+1)(m+2)}{2}$
relations. 

The corresponding generators of $Q_{1}^{2}\cap Q_{2}^{2}$ are as
follows.
\begin{enumerate}
\item The $m-1$ generators $g_{i}^{2}(x_{1}+f_{1})^{2}+(x_{i}+f_{i})^{2}-2g_{i}(x_{1}+f_{1})(x_{i}+f_{i})$,
$1<i\leq m$.
\item The $m-1$ generators $g_{i}(f_{1}-x_{1})(x_{1}+f_{1})^{2}+(x_{1}-f_{1})(x_{1}+f_{1})(x_{i}+f_{i})$,
$1<i\leq m$.
\item The $(m-1)(m-2)$ generators $g_{i}g_{j}(3f_{1}-x_{1})(x_{1}+f_{1})^{2}+(x_{j}-f_{j})(x_{1}+f_{1})(x_{i}+f_{i})+-2f_{i}(x_{1}+f_{1})(x_{j}+f_{j})$,
$1<i\neq j<m$.
\item The $(m-1)(m-2)/2$ generators $g_{i}g_{j}(2f_{1}-x_{1})(x_{1}+f_{1})^{2}-f_{j}(x_{1}+f_{1})(x_{i}+f_{i})-f_{i}(x_{1}+f_{1})(x_{j}+f_{j})+x_{1}(x_{i}+f_{i})(x_{j}+f_{j})$,
$1<i\neq j<m$.
\item The $(m-1)(m-2)/2$ generators $g_{i}g_{j}(x_{1}+f_{1})^{2}-g_{j}(x_{1}+f_{1})(x_{i}+f_{i})-g_{i}(x_{1}+f_{1})(x_{j}+f_{j})+(x_{i}+f_{i})(x_{j}+f_{j})$,
$1<i\neq j<m$.
\item The $(m-1)(m-2)$ generators $g_{i}^{2}g_{j}(4f_{1}-x_{1})(x_{1}+f_{1})^{2}-3f_{1}g_{i}g_{j}(x_{1}+f_{1})(x_{i}+f_{i})-f_{1}g_{i}^{2}(x_{1}+f_{1})(x_{j}+f_{j})+x_{i}(x_{i}+f_{i})(x_{j}+f_{j})$,
$1<i\neq j<m$.
\item The $m-1$ generators $g_{i}^{2}(3f_{1}-x_{1})(x_{1}+f_{1})^{2}+(x_{i}-3f_{i})(x_{1}+f_{1})(x_{i}+f_{i})$,
$1<i\leq m$.
\item The $m-1$ generators $(g_{i}(2f_{1}-x_{1})+x_{i})(x_{1}+f_{1})^{2}-2f_{1}(x_{1}+f_{1})(x_{i}+f_{i})$,
$1<i\leq m$.
\item The $m-1$ generators $g_{i}^{3}(4f_{1}-x_{1})(x_{1}+f_{1})^{2}+x_{i}(x_{i}+f_{i})^{2}-4f_{1}g_{i}^{2}(x_{1}+f_{1})(x_{i}+f_{i})$,
$1<i\leq m$.
\item The $m-1$ generators $g_{i}^{2}(2f-x_{1})(x_{1}+f_{1})^{2}+x_{1}(x_{i}+f_{i})^{2}-2f_{i}(x_{1}+f_{1})(x_{i}+f_{i})$,
$1<i\leq m$.
\item The $(m-1)(m-2)$ generators $g_{i}^{2}g_{j}(4f_{1}-x_{1})(x_{1}+f_{1})^{2}+x_{j}(x_{i}+f_{i})^{2}-2f_{1}g_{i}g_{j}(x_{1}+f_{1})(x_{i}+f_{i})-2f_{1}g_{i}^{2}(x_{1}+f_{1})(x_{j}+f_{j})$,
$1<i\neq j<m$.
\item The $(m-1)(m-2)(m-3)/2$ generators $g_{i}g_{j}g_{k}(2f_{1}-x_{1})(x_{1}+f_{1})^{2}+x_{k}(x_{i}+f_{i})(x_{j}+f_{j})-f_{1}g_{j}(x_{k}+f_{k})(x_{i}+f_{i})-f_{1}g_{j}(x_{k}+f_{k})(x_{j}+f_{j})$.
\end{enumerate}

\subsection{$(Q_{1}^{2}\cap Q_{2}^{2})\cap S\subseteq\mathfrak{m}P$ (general
case)\label{sub: general case computations}}

As discussed in Section \ref{sub:-(special-case)}, in the special
case of three generators, we need to apply the trace map to each of
the generators of $Q_{1}^{2}\cap Q_{2}^{2}$ above along with the
generators of $Q_{1}^{2}Q_{2}^{2}$ to get the generators for $(Q_{1}^{2}\cap Q_{2}^{2})\cap S$
and show that each of these generators lies in $\mathfrak{m}P$. This
will complete the proof that $P^{(2)}\subseteq\mathfrak{m}P$ in the
general case. We illustrate these computations below. A step by step
verification of these computations is available as an ancillary file
to this paper (appendix\_of\_computations.pdf).
\begin{enumerate}
\item $\text{tr}(g_{i}^{2}(x_{1}+f_{1})^{2}+(x_{i}+f_{i})^{2}-2g_{i}((x_{1}+f_{1})(x_{i}+f_{i})))=x_{1}^{2}g_{i}^{2}+f_{1}^{2}g_{i}^{2}-2x_{1}x_{i}g_{i}-2f_{1}f_{2}g_{i}+x_{i}^{2}+f_{i}^{2}=x_{1}g_{i}(x_{1}g_{i}-x_{i})-x_{i}(x_{1}g_{i}-x_{i})\in\mathfrak{m}P$.
\item $\text{tr}(g_{i}(f_{1}-x_{1})(x_{1}+f_{1})^{2}+(x_{1}-f_{1})(x_{1}+f_{1})(x_{i}+f_{i}))=-x_{1}^{3}g_{i}+x_{1}^{2}f_{1}g_{i}-x_{1}f_{1}^{2}g_{i}+f_{1}^{3}g_{i}+x_{1}^{2}x_{i}-x_{1}x_{i}f_{1}+x_{1}f_{1}f_{i}-f_{1}^{2}f_{i}=-x_{1}^{2}(x_{1}g_{i}-x_{i})+x_{1}f_{1}(x_{1}g_{i}-x_{i})\in\mathfrak{m}P$.
\item $\text{tr}(g_{i}g_{j}(3f_{1}-x_{1})(x_{1}+f_{1})^{2}-2f_{i}((x_{1}+f_{1})(x_{j}+f_{j})+(x_{j}-f_{j})((x_{1}+f_{1})(x_{i}+f_{i}))=-x_{1}^{3}g_{i}g_{j}+3x_{1}^{2}f_{1}g_{i}g_{j}-x_{1}f_{1}^{2}g_{i}g_{j}+3f_{1}^{3}g_{i}g_{j}-2x_{1}x_{j}f_{1}g_{i}-2f_{1}^{2}f_{j}g_{i}-x_{1}x_{i}f_{1}g_{j}-f_{1}^{2}f_{i}g_{j}+x_{1}x_{i}x_{j}+x_{j}f_{1}f_{i}=-x_{1}^{3}g_{j}(x_{1}g_{i}-x_{i})-x_{1}x_{i}(x_{1}g_{j}-x_{j})+2x_{1}f_{1}g_{i}(x_{1}g_{j}-x_{j})+x_{1}f_{1}g_{j}(x_{1}g_{i}-x_{i})+f_{1}^{2}g_{i}(x_{j}-x_{1}g_{j})\in\mathfrak{m}P$.
\item $\text{tr}(g_{i}g_{j}(2f_{1}-x_{1})(x_{1}+f_{1})^{2}+x_{1}(x_{i}+f_{i})(x_{j}+f_{j})-f_{i}(x_{1}+f_{1})(x_{j}+f_{j})-f_{j}(x_{1}+f_{1})(x_{i}+f_{i}))=-x_{1}^{3}g_{i}g_{j}+2x_{1}^{2}f_{1}g_{i}g_{j}-x_{1}f_{1}^{2}g_{i}g_{j}+2f_{1}^{3}g_{i}g_{j}+x_{1}x_{i}x_{j}-x_{1}x_{j}f_{i}-x_{1}x_{i}f_{j}+x_{1}f_{i}f_{j}-2f_{1}f_{i}f_{j}=-x_{1}^{3}g_{j}(x_{1}g_{i}-x_{i})-x_{1}x_{i}(x_{1}g_{j}-x_{j})+x_{1}f_{1}g_{j}(x_{1}g_{i}-x_{i})+x_{1}f_{1}g_{i}(x_{1}g_{j}-x_{j})\in\mathfrak{m}P$.
\item $\text{tr}(g_{i}g_{j}(x_{1}+f_{1})^{2}+(x_{i}+f_{i})(x_{j}+f_{j})-g_{i}(x_{1}+f_{1})(x_{j}+f_{j})-g_{j}(x_{1}+f_{1})(x_{i}+f_{i}))=x_{1}^{2}g_{i}g_{j}+f_{1}^{2}g_{i}g_{j}-x_{1}x_{j}g_{i}-f_{1}f_{j}g_{i}-x_{1}x_{i}g_{j}-f_{1}f_{i}g_{j}+x_{i}x_{j}+f_{i}f_{j}=x_{1}g_{i}(x_{1}g_{j}-x_{j})-x_{i}(x_{1}g_{j}-x_{j})\in\mathfrak{m}P$.
\item $\text{tr}(g_{i}^{2}g_{j}(4f_{1}-x_{1})(x_{1}+f_{1})^{2}+x_{i}(x_{i}+f_{i})(x_{j}+f_{j})-f_{1}g_{i}^{2}(x_{1}+f_{1})(x_{j}+f_{j})-3f_{1}g_{i}g_{j}(x_{1}+f_{1})(x_{j}+f_{j}))=-x_{1}^{3}g_{i}^{2}g_{j}+4x_{1}^{2}f_{1}g_{i}^{2}g_{j}-x_{1}f_{1}^{2}g_{i}^{2}g_{j}+4f_{1}^{3}g_{i}^{2}g_{j}-x_{1}x_{j}f_{1}g_{i}^{2}-f_{1}^{2}f_{j}g_{i}^{2}-3x_{1}x_{i}f_{1}g_{i}g_{j}-3f_{1}^{2}f_{i}g_{i}g_{j}+x_{i}^{2}x_{j}+x_{i}f_{i}f_{j}=(-x_{1}^{2}g_{j}g_{i}-x_{1}x_{i}g_{j})(x_{1}g_{i}-x_{i})-x_{i}^{2}(x_{1}g_{j}-x_{j})+x_{1}f_{1}g_{i}^{2}(x_{1}g_{j}-x_{j})+3x_{1}f_{1}g_{i}g_{j}(x_{1}g_{i}-x_{i})+f_{1}^{2}g_{i}g_{j}(x_{i}-x_{1}g_{i})\in\mathfrak{m}P$.
\item $\text{tr}(g_{i}^{2}(3f_{1}-x_{1})(x_{1}+f_{1})^{2}+(x_{i}-3f_{i})(x_{1}+f_{1})(x_{i}+f_{i}))=-x_{1}^{3}g_{i}^{2}+3x_{1}^{2}f_{1}g_{i}^{2}-x_{1}f_{1}^{2}g_{i}^{2}+3f_{1}^{3}g_{i}^{2}-3x_{1}x_{i}f_{1}g_{i}-3f_{1}^{2}f_{i}g_{i}+x_{1}x_{i}^{2}+x_{i}f_{1}f_{i}=-x_{1}(x_{1}g_{i}+x_{i})(x_{1}g_{i}-x_{i})+3x_{1}f_{1}g_{i}(x_{1}g_{i}-x_{i})+f_{1}^{2}g_{i}(x_{i}-x_{1}g_{i})\in\mathfrak{m}P$.
\item $\text{tr}((g_{i}(2f_{1}-x_{1})+x_{i})(x_{1}+f_{1})^{2}-2f_{1}((x_{1}+f_{1})(x_{i}+f_{i})))=-x_{1}^{3}g_{i}+x_{1}^{2}x_{i}-2x_{1}^{2}f_{1}g_{i}+2x_{1}^{2}f_{i}-x_{1}f_{1}^{2}g_{i}-x_{i}f_{1}^{2}+2x_{1}f_{1}f_{i}=-x_{1}^{2}(g_{i}x_{1}-x_{i})+f_{1}^{2}(g_{i}x_{1}-x_{i})\in\mathfrak{m}P$.
\item $\text{tr}(g_{i}^{3}(4f_{1}-x_{1})(x_{1}+f_{1})^{2}+x_{i}(x_{i}+f_{i})^{2}-4f_{1}g_{i}^{2}(x_{1}+f_{1})(x_{i}+f_{i}))=-x_{1}^{3}g_{i}^{3}+4x_{1}^{2}f_{1}g_{i}^{3}-x_{1}f_{1}^{2}g_{i}^{3}+4f_{1}^{3}g_{i}^{3}-4x_{1}x_{i}f_{1}g_{i}^{2}-4f_{1}^{2}f_{i}g_{i}^{2}+x_{i}^{3}+x_{i}f_{i}^{2}=(x_{i}-x_{1}g_{i})(x_{i}^{2}+x_{1}x_{i}g_{i}+x_{1}^{2}g_{i}^{2})-f_{1}^{2}g_{i}^{2}(x_{1}g_{i}-x_{i})+4x_{1}f_{1}g_{i}^{2}(x_{1}g_{i}-x_{i})\in\mathfrak{m}P$.
\item $\text{tr}(g_{i}^{2}(2f-x_{1})(x_{1}+f_{1})^{2}+x_{1}(x_{i}+f_{i})^{2}-2f_{i}((x_{1}+f_{1})(x_{i}+f_{i})))=-x_{1}^{3}g_{i}^{2}-x_{1}f_{1}^{2}g_{i}^{2}+2x_{1}^{2}f_{i}g_{i}+2f_{1}^{2}f_{i}g_{i}+x_{1}x_{i}^{2}-2x_{1}x_{i}f_{i}+x_{1}f_{i}^{2}-2f_{1}f_{i}^{2}=-x_{1}(x_{1}g_{i}+x_{i})(x_{1}g_{i}-x_{i})-2x_{1}f_{1}g_{i}(x_{i}-x_{1}g_{i})\in\mathfrak{m}P$.
\item $\text{tr}(g_{i}^{2}g_{j}(4f_{1}-x_{1})(x_{1}+f_{1})^{2}+x_{j}(x_{i}+f_{i})^{2}-2f_{1}g_{i}^{2}(x_{1}+f_{1})(x_{j}+f_{j})-2f_{1}g_{i}g_{j}(x_{1}+f_{1})(x_{i}+f_{i}))=-x_{1}^{3}g_{i}^{2}g_{j}+4x_{1}^{2}f_{1}g_{i}^{2}g_{j}-x_{1}f_{1}^{2}g_{i}^{2}g_{j}+4f_{1}^{3}g_{i}^{2}g_{j}-2x_{1}x_{j}f_{1}g_{i}^{2}-2f_{1}^{2}f_{j}g_{i}^{2}-2x_{1}x_{i}f_{1}g_{i}g_{j}-2f_{1}^{2}f_{i}g_{i}g_{j}+x_{i}^{2}x_{j}+x_{j}f_{i}^{2}=-x_{1}^{2}g_{i}g_{j}(x_{1}g_{i}-x_{i})-x_{1}x_{i}g_{j}(x_{1}g_{i}-x_{i})-x_{i}^{2}(x_{1}g_{j}-x_{j})-f_{1}^{2}g_{i}^{2}(x_{1}g_{j}-x_{j})+2x_{1}f_{1}g_{i}^{2}(x_{1}g_{j}-x_{j})+2x_{1}f_{1}g_{i}g_{j}(x_{1}g_{i}-x_{i})\in\mathfrak{m}P$.
\item $\text{tr}(g_{i}g_{j}g_{k}(2f_{1}-x_{1})(x_{1}+f_{1})^{2}+x_{k}(x_{i}+f_{i})(x_{j}+f_{j})-f_{1}g_{j}(x_{k}+f_{k})(x_{i}+f_{i})-f_{1}g_{j}(x_{k}+f_{k})(x_{j}+f_{j}))=-x_{1}^{3}g_{i}g_{j}g_{k}+2x_{1}^{2}f_{1}g_{1}g_{j}g_{k}-x_{1}f_{1}^{2}g_{i}g_{j}g_{k}+2f_{1}^{3}g_{i}g_{j}g_{k}+x_{i}x_{j}x_{k}-f_{i}x_{j}x_{k}-f_{j}x_{i}x_{k}+f_{i}f_{j}x_{k}-2f_{i}f_{j}f_{k}=-x_{1}^{2}g_{i}g_{j}(x_{1}g_{k}-x_{k})-x_{1}x_{k}g_{i}(x_{1}g_{j}-x_{j})-x_{j}x_{k}(x_{1}g_{i}-x_{i})-f_{1}^{2}g_{i}g_{j}(x_{1}g_{k}-x_{k})+f_{1}g_{i}g_{j}x_{1}(x_{1}g_{k}-x_{k})+f_{1}g_{i}x_{k}(x_{1}g_{j}-x_{j})+f_{1}g_{i}g_{j}x_{1}(x_{1}g_{k}-x_{k})+f_{1}g_{j}x_{k}(x_{1}g_{i}-x_{i})\in\mathfrak{m}P$.
\end{enumerate}
Lastly, we need to show the generators of $(Q_{1}^{2}\cap Q_{2}^{2})\cap S$
arising by applying the trace map to $Q_{1}^{2}Q_{2}^{2}$ also lie
in $\mathfrak{m}P$. We can express, 
\[
Q_{1}^{2}Q_{2}^{2}=\{(x_{i}-f_{i})(x_{j}-f_{j})(x_{i^{'}}+f_{i^{'}})(x_{j^{'}}+f_{j^{'}}):i,i^{'},j,j^{'}\in\{1,...,m\}\}.
\]

Then we have that $\text{tr}((x_{i}-f_{i})(x_{j}-f_{j})(x_{i^{'}}+f_{i^{'}})(x_{j^{'}}+f_{j^{'}}))=x_{i}x_{i^{'}}x_{j}x_{j^{'}}+f_{i}f_{i^{'}}x_{j}x_{j^{'}}+x_{i}x_{i^{'}}f_{j}f_{j^{'}}-f_{i}x_{i^{'}}f_{j}x_{j^{'}}-f_{i}x_{i^{'}}x_{j}f_{j^{'}}-x_{i}f_{i^{'}}f_{j}x_{j^{'}}-x_{i}f_{i^{'}}x_{j}f_{j^{'}}+f_{i}f_{i^{'}}f_{j}f_{j^{'}}=x_{i^{'}}x_{j^{'}}(x_{i}x_{j}-f_{i}f_{j})+x_{j}x_{j^{'}}(f_{i}f_{i^{'}}-x_{i}x_{i^{'}})+x_{i}x_{j^{'}}(x_{i^{'}}x_{j}-f_{i^{'}}f_{j})+x_{i}x_{i^{'}}(f_{j}f_{j^{'}}-x_{j}x_{j^{'}})+x_{i^{'}}x_{j}(x_{i}x_{j^{'}}-f_{i}f_{j^{'}})-f_{i^{'}}f_{j^{'}}(x_{i}x_{j}-f_{i}f_{j})\in\mathfrak{m}P$.

Thus, every generator of $(Q_{1}^{2}\cap Q_{2}^{2})\cap S$ is in
$\mathfrak{m}P$, showing $(Q_{1}^{2}\cap Q_{2}^{2})\cap S\subseteq\mathfrak{m}P$
as promised. Thus, we have shown that $P^{(2)}\subseteq\mathfrak{m}P$
(using corollary \ref{cor:symbolic power containment}), completing
the proof of the main theorem.
\begin{acknowledgement*}
I would like to thank Prof. Melvin Hochster for his ideas and advice
during the preparation of this manuscript, which is part of my doctoral
thesis submitted to the University of Michigan, Ann Arbor.
\end{acknowledgement*}
\bibliographystyle{elsarticle-harv}
\bibliography{thesisref}

\end{document}